\font\smallit=cmti10
\font\smalltt=cmtt10
\newcommand\blfootnote[1]{%
  \begingroup
  \renewcommand\thefootnote{}\footnote{#1}%
  \addtocounter{footnote}{-1}%
  \endgroup
}
\newcommand{\bburl}[1]{\textcolor{blue}{\url{#1}}}
\renewcommand\section{\@startsection {section}{1}{\z@}
{-30pt \@plus -1ex \@minus -.2ex}
{2.3ex \@plus.2ex}
{\normalfont\normalsize\bfseries\boldmath}}
\renewcommand\subsection{\@startsection{subsection}{2}{\z@}
{-3.25ex\@plus -1ex \@minus -.2ex}
{1.5ex \@plus .2ex}
{\normalfont\normalsize\bfseries\boldmath}}
\renewcommand{\@seccntformat}[1]{\csname the#1\endcsname. }
\newcommand{\myitem}[1]{%
\item[#1]\protected@edef\@currentlabel{#1}%
}
\newtheorem{thm}{Theorem}[section]
\newtheorem{conj}[thm]{Conjecture}
\newtheorem{cor}[thm]{Corollary}
\newtheorem{lem}[thm]{Lemma}
\newtheorem{rek}[thm]{Remark}
\newcommand{\Mod}[1]{\ \mathrm{mod}\ #1}
\begin{document}

\begin{center}
\uppercase{\bf Infinite Families of Partitions into MSTD Subsets}
\vskip 20pt
{\bf H\`ung Vi\d{\^e}t Chu}\\
{\smallit Department of Mathematics, Washington and Lee University, Lexington, VA 24450}\\
{\tt chuh19@mail.wlu.edu}\\
\vskip 10pt
{\bf Noah Luntzlara}\\
{\smallit Department of Mathematics, University of Michigan, Ann Arbor, MI 48109}\\
{\tt nluntzla@umich.edu}\\
\vskip 10pt
{\bf Steven J. Miller}\\
{\smallit Department of Mathematics and Statistics, Williams College, Williamstown, MA 01267}\\
{\tt sjm1@williams.edu}\\
\vskip 10pt
{\bf Lily Shao}\\
{\smallit Department of Mathematics and Statistics, Williams College, Williamstown, MA 01267}\\
{\tt ls12@williams.edu}\\
\end{center}
\vskip 20pt
\centerline{\smallit Received: , Revised: , Accepted: , Published: } 
\vskip 30pt

\centerline{\bf Abstract}
\blfootnote{The authors were supported by NSF grants DMS1659037 and DMS1561945, the Finnerty Fund, University of Michigan, Washington and Lee University and Williams College.}
\noindent

A set $A$ is MSTD (more-sum-than-difference) if $|A+A|>|A-A|$. Though MSTD sets are rare, Martin and O'Bryant proved that there exists a positive constant lower bound for the proportion of MSTD subsets of $\{1,2,\ldots ,r\}$ as $r\rightarrow\infty$. Later, Asada et al.\ showed that there exists a positive constant lower bound for the proportion of decompositions of $\{1,2,\ldots,r\}$ into two MSTD subsets as $r\rightarrow\infty$. However, the method is probabilistic and does not give explicit decompositions.

Continuing this work, we provide an efficient method to partition $\{1,2,\ldots,r\}$ (for $r$ sufficiently large) into $k \ge 2$ MSTD subsets, positively answering a question raised by Asada et al. as to whether this is possible for all such $k$. Next, let $R(k)$ be the smallest integer such that for all $r\ge R(k)$, $\{1,2,\ldots,r\}$ can be $k$-decomposed into MSTD subsets. We establish rough lower and upper bounds for $R(k)$. Lastly, we provide a sufficient condition on when there exists a positive constant lower bound for the proportion of decompositions of $\{1,2,\ldots,r\}$ into $k$ MSTD subsets as $r\rightarrow \infty$.

\pagestyle{myheadings}
\markright{\smalltt INTEGERS: 19 (2019)\hfill}
\thispagestyle{empty}
\baselineskip=12.875pt
\vskip 30pt

\section{Introduction}

\subsection{Background}
Given a set $A$ of integers, define $A+A=\{a_i+a_j|a_i,a_j\in A\}$ and $A-A=\{a_i-a_j|a_i,a_j\in A\}$. Then $A$ is said to be sum-dominant or MSTD (more-sums-than-differences) if $|A+A|>|A-A|$, balanced if $|A+A|=|A-A|$ and difference-dominated if $|A+A|<|A-A|$; see \cite{He,Ma,Na1,Na2,Ru1,Ru2,Ru3} for some history and early results in the subject. Research on MSTD sets has made great progress in the last twenty years. In particular, Martin and O'Bryant \cite{MO} showed that with the uniform model, where each element is chosen with probability $1/2$, the proportion of MSTD subsets of $\{1,2,\ldots,r\}$ is uniformly bounded below by a positive constant for large enough $r$. Zhao \cite{Zh2} showed that the proportion converges as $r\to \infty$ and improved the lower bound to $4.28\cdot 10^{-4}$. On the other hand, Hegarty and Miller \cite{HM} proved that with a sparse model, where each element is chosen with probability $p(r)$ such that  $r^{-1} = o(p(r))$ and $p(r)\to 0$ as $r\to\infty$, almost all sets are difference-dominated. These two results do not contradict each other since the probability of being MSTD subsets depends on which model we are using. In proving a lower bound for the proportion of MSTD subsets, Martin and O'Bryant used the probabilistic method and did not give explicit constructions of MSTD sets. Later works gave explicit construction of large families of MSTD sets: Miller et al. \cite{MOS} gave a family of MSTD subsets of $\{1,2,\ldots,r\}$ with density $\Theta(1/r^4)$\footnote{\cite{ILMZ2} showed that with slightly more work, the density is improved to $\Theta(1/r^2)$.}, while Zhao \cite{Zh1} gave a denser family with density $\Theta(1/r)$, the current record.

In \cite{AMMS}, the authors used a technique introduced by Zhao \cite{Zh2} to show that the proportion of 2-decompositions (i.e., partitions into two sets) of $\{1,2,\ldots,r\}$ that gives two MSTD subsets is bounded below by a positive constant. This result is surprising in view of the conventional method of constructing MSTD sets, which is to fix a fringe pair $(L,R)$ of two sets containing elements to be used in the fringe of the interval and argue that all the middle elements appear with some positive probability. (The fringe pair ensures that some of the largest and smallest differences are missed and that our set is MSTD.) However, the result in \cite{AMMS} seems to suggest that we can find two (or more) disjoint fringe pairs $(L_1, R_1)$ and $(L_2,R_2)$ such that $L_1\cup L_2$ and $R_1\cup R_2$ cover a full set of left and right elements of $\{1,2,\ldots,r\}$ and $(L_1,R_1),(L_2,R_2)$ are two fringe pairs for two disjoint MSTD sets. Previous research has focused on each fringe pair independently, so it is interesting to see that two (or more) fringe pairs can complement each other nicely on both sides of $\{1,2,\ldots,r\}$. Motivated by that, we provide a method to construct these fringe pairs and study partitions of $\{1,2,\ldots,r\}$ into MSTD subsets more thoroughly.


\subsection{Notation and Main Results}
Let $[a,b]$ denote $\{\ell\in\mathbb{Z}|a\le \ell\le b\}$ and $I_r$ denote $[1,r]$. We use the idea of $P_n$ sets described in \cite{MOS}. A set $A$ is said to be $P_n$ if the following conditions are met. Let $a=\min A$ and $b=\max A$. Then
\begin{align}
A+A&\ \supseteq \ [2a+n,2b-n] \label{Pnsums}\\
A-A&\ \supseteq \ [(a-b)+n,(b-a)-n].\label{Pndiffs}
\end{align}
A set $A$ is $P_n$ with respect to sums ($SP_n$) if condition~(\ref{Pnsums}) is satisfied, and $P_n$ with respect to differences ($DP_n$) if condition~(\ref{Pndiffs}) is satisfied. Next, let $[a,b]_2$ denote $\{\ell\in\mathbb{Z}|a\le \ell\le b \mbox{ and }\ell-a\mbox{ is even}\}$. Finally, a $2$-decomposition of a set $S$ is $A_1\cup A_2=S$, where $A_1$, $A_2$ are nonempty and $A_1\cap A_2=\emptyset$. We use the words \textit{decomposition} and \textit{partition} interchangeably.
Our main result is the following.

\begin{thm}\label{2decomp}
Let $A_1$ and $A_2$ be chosen such that both are MSTD, $A_1$ is $P_n$ and $A_2$ is $P_{n-4}$. Also,
\begin{enumerate}
\item $(A_1, A_2)$ partition $[1,2n]$,\\
\item $A_i=L_i\cup R_i$ with $L_i\subseteq [1,n]$ and $R_i\subseteq [n+1,2n]$ for $i=1,2$,\\
\item $[1,4]\cup \{n\}\subseteq L_1$ and $\{n+1\}\cup [2n-3,2n]\subseteq R_1$, and\\
\item $[5,7]\subseteq L_2$ and $[2n-6,2n-4]\subseteq R_2$.
\end{enumerate}
(See Remark \ref{Exp1} for an example of such sets $A_1$ and $A_2$.)

Pick $k\ge n/2+2$ and $m\in\mathbb{N}_0$. Set
\begin{align*}
R_1'\ &=\ R_1+m+4k+4,\\
R_2'\ &=\ R_2+m+4k+4,\\
O_{11}\ &=\ \{n+4\}\cup[n+5,n+2k+1]_2\cup\{n+2k+2\},\\
O_{12}\ &=\ \{n+m+2k+3\}\cup[n+m+2k+4,n+m+4k]_2\cup\{n+m+4k+1\},\\
O_{21}\ &=\ [n+1,n+3]\cup[n+6,n+2k]_2\cup [n+2k+3,n+2k+5],\\
O_{22}\ &=\ [n+m+2k,n+m+2k+2]\cup[n+m+2k+5,n+m+4k-1]_2\\
&\qquad\qquad\cup[n+m+4k+2,n+m+4k+4].
\end{align*}
Let $M_1\subseteq [n+2k+6,n+m+2k-1]$ such that within $M_1$, there exists a sequence of pairs of consecutive elements, where consecutive pairs in the sequence are not more than $2k-1$ apart and the sequence starts with a pair in $[n+2k+6,n+4k+1]$ and ends with a pair in $[n+m+4,n+m+2k-1]$. Let $M_2\subseteq [n+2k+6,n+m+2k-1]$ such that within $M_2$, there exists a sequence of triplets of consecutive elements, where consecutive triplets in the sequence are not more than $2k+5$ apart and the sequence starts with a triplet in $[n+2k+6,n+4k+5]$ and ends with a triplet in $[n+m,n+m+2k-1]$. Also, $M_1\cap M_2=\emptyset$ and $M_1\cup M_2=[n+2k+6,n+m+2k-1]$. Then
\begin{align*}
A'_1\ &=\ L_1\cup O_{11}\cup M_1\cup O_{12}\cup R'_1\\
A'_2\ &=\ L_2\cup O_{21}\cup M_2\cup O_{22}\cup R'_2
\end{align*} are both MSTD and partition $[1,2n+m+4k+4]$.
\end{thm}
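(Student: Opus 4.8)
The plan is to prove the two assertions separately, treating the partition claim first as pure interval bookkeeping. Writing $s=m+4k+4$ for the common shift, note that $L_1\cup L_2=[1,n]$ and $R_1'\cup R_2'=(R_1\cup R_2)+s=[n+s+1,2n+s]$ follow at once from the fact that $(A_1,A_2)$ already partitions $[1,2n]$ with $L_i=A_i\cap[1,n]$ and $R_i=A_i\cap[n+1,2n]$. The only content is that the oscillating blocks tile the middle: reading offsets from $n$, the two step-$2$ progressions $[n+5,n+2k+1]_2$ (odd offsets) and $[n+6,n+2k]_2$ (even offsets) interleave to cover offsets $5$ through $2k+1$, and the flanking singletons $\{n+4\},\{n+2k+2\}\subseteq O_{11}$ and triples $[n+1,n+3],[n+2k+3,n+2k+5]\subseteq O_{21}$ fill the ends, giving $O_{11}\cup O_{21}=[n+1,n+2k+5]$; the identical parity argument gives $O_{12}\cup O_{22}=[n+m+2k,n+m+4k+4]$; and $M_1\cup M_2=[n+2k+6,n+m+2k-1]$ by hypothesis. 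Concatenating these three intervals with $[1,n]$ and $[n+s+1,2n+s]$ yields $[1,2n+s]$ with all pieces disjoint.

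For the MSTD claim I would use the $P$-set reduction. For any finite $A$ with $\min A=a$ and $\max A=b$ one has $A+A\subseteq[2a,2b]$ and $A-A\subseteq[a-b,b-a]$, two intervals of equal length $2b-2a+1$; letting $\sigma,\delta$ count the integers \emph{missing} from $A+A$ and from $A-A$, we get $|A+A|-|A-A|=\delta-\sigma$, so $A$ is MSTD exactly when $\delta>\sigma$. If in addition $A$ is $SP_p$ and $DP_p$, every missing sum lies in the fringe $[2a,2a+p-1]\cup[2b-p+1,2b]$ and every missing difference in the corresponding difference-fringe, so $\sigma$ and $\delta$ are determined solely by the behavior of $A$ near its extremes. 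For $A_i'$ the extreme-low sums come only from $L_i+L_i$, the extreme-high sums only from $R_i'+R_i'$, and the near-maximal differences only from $R_i'-L_i$, since realizing such a difference forces one coordinate into the top block and one into the bottom block. As $L_i$ is unchanged from $A_i$ and $R_i'=R_i+s$ is merely a translate, these three fringe contributions coincide with those of $A_i$; hence, once $A_i'$ is shown to be $SP_p$ and $DP_p$ for a $p$ small enough that its fringe windows stay inside the pure $L_i+L_i$, $R_i'+R_i'$, and $R_i'-L_i$ regions, its surplus $\delta-\sigma$ equals that of $A_i$, which is positive because $A_i$ is MSTD.

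The heart of the argument, and the step I expect to be the main obstacle, is therefore establishing that each $A_i'$ is $SP_p$ and $DP_p$, i.e.\ that $A_i'+A_i'$ and $A_i'-A_i'$ are full intervals away from bounded fringes. I would do this by a block-by-block coverage analysis, computing the sum- and difference-ranges contributed by each block and each relevant pair of blocks ($L_i$ with $O_{i1}$, $O_{i1}$ with itself and with $M_i$, $M_i$ with $O_{i2}$, $O_{i2}$ with $R_i'$, together with the longer-range pairings differences require), and checking that consecutive ranges overlap so no middle integer is skipped. Two features must be tracked: first, a step-$2$ progression alone supplies only one parity class, so parity is repaired by the consecutive singletons/triples flanking each $O_{ij}$ and by the consecutive pairs in $M_1$ and triples in $M_2$ spread through the middle; second, the spacing hypotheses—consecutive pairs of $M_1$ at most $2k-1$ apart, consecutive triples of $M_2$ at most $2k+5$ apart—are precisely what force the sums and differences generated near one cluster to overlap those near the next, so the full interval is swept across all of $[n+2k+6,n+m+2k-1]$. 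Finally, the constraint $k\ge n/2+2$ makes each $O_{ij}$ wider than the $P_n$-fringe of $A_i$ (width $\gtrsim 2k\ge n+4$), which is exactly what lets the filled middle reach down to meet $L_i+L_i$ and up to meet $R_i'+R_i'$ with no gap; with these adjacencies verified one may take $p=O(n)$, independent of $m$ and $k$, completing the reduction and hence the proof.
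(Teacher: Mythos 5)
Your partition bookkeeping is exactly the paper's: $O_{11}\cup O_{21}=[n+1,n+2k+5]$, $O_{12}\cup O_{22}=[n+m+2k,n+m+4k+4]$, and the rest by hypothesis. Your overall reduction for MSTD-ness (fringe purity, translation-invariance of $L_i$ and $R_i'=R_i+m+4k+4$, fullness of the middle) is also the paper's strategy, implemented there as Lemma \ref{P_nsum} plus the coverage Lemmas \ref{ft} and \ref{ss}. But your plan deviates in one place where it would stall: you insist on proving that each $A_i'$ is $DP_p$, in order to get an \emph{equality} of surpluses $\delta-\sigma$ with $A_i$. That is both unnecessary and unsupported by the hypotheses, which were tuned only for sums: nothing about the spacing conditions on $M_1,M_2$ is designed to make $M_i-M_i$, $O_{i2}-O_{i1}$, $R_i'-M_i$, etc.\ fill an interval, and you would have to verify all of these from scratch. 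The paper's Lemma \ref{P_nsum} shows you should not want the equality anyway: since $A+A$ and $A-A$ live in intervals of the same length $2(\max-\min)+1$, the \emph{worst case} for MSTD-ness is precisely that $A_i'-A_i'$ fills its entire middle, so the number of new differences is at most $2(m+4k+4)$, while $SP$-fullness plus the translated fringes pins the number of new sums at exactly $2(m+4k+4)$. Hence $|A_i'+A_i'|-|A_i'-A_i'|\ge |A_i+A_i|-|A_i-A_i|>0$: replace your equality by this inequality, and the $DP_p$ obligation disappears entirely.

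The remaining and more serious issue is that the step you yourself call ``the heart of the argument''---showing each $A_i'$ is $SP_n$ (resp.\ $SP_{n-4}$)---is only announced, not executed; this is the entire technical content of the paper's Lemmas \ref{ft} and \ref{ss}. Your sketch does identify the correct mechanisms: $k\ge n/2+2$ is exactly what makes $(\{1,2\}+O_{11})$, which reaches up to $n+2k+4$, meet $O_{11}+O_{11}=[2n+8,2n+4k+4]$; and the gap bounds $2k-1$ (pairs in $M_1$) and $2k+5$ (triples in $M_2$) are what let $M_i+O_{i1}$ sweep $[2n+4k+4,2n+m+2k+7]$ without holes. But the proof consists of actually computing these block sums and checking every consecutive overlap (e.g.\ $[n+2,n+4]$ from $\{2,3,4,n\}$, then $(1+O_1)\cup(2+O_1)=[n+5,n+2k+4]$, then $O_1+O_1$, then $M+O_1$ in the worst placement of the first and last pair, then $(\{n+4,n+5\}+O_2)$), and none of that is done. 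One labor-saving device in the paper that your plan misses: each $A_i'$ is structurally symmetric under $x\mapsto(2n+m+4k+5)-x$ (the hypotheses on $(L_1,R_1)$, $(L_2,R_2)$ and the blocks $O_{ij}$ are mirror images), so only the lower half $[n+2,2n+m+4k+5]\subseteq A_i'+A_i'$ needs checking; in particular your pairing ``$O_{i2}$ with $R_i'$'' is never needed.
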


\begin{rek}\label{Exp1}\normalfont
To show that our family is not empty, we need to show the existence of at least one pair of $A_1$ and $A_2$. Note that our technique is similar to many other papers \cite{He,MO,MOS,MPR,PW} in the sense that we need a good fringe to start with. A random search yielded
\begin{align*}
A_1\ &=\ \{1, 2, 3, 4, 8, 9, 11, 13, 14, 15, 20, 21, 26, 27, 28, 31, 33, 37, 38, 39, 40\},\\A_2\ &=\ \{5,6,7,10,12,16,17,18,19,22,23,24,25,29,30,32,34,35,36\}.\end{align*}
We have
\begin{equation*}
\begin{aligned}
A_1+A_1\ &=\ [2,80]\\
A_1-A_1\ &=\ [-39,39]\backslash\{\pm 21\}
\end{aligned}
\quad \text{and} \quad
\begin{aligned}
A_2+A_2\ &=\ [10,72]\\
A_2-A_2\ &=\ [-31,31]\backslash\{\pm 21\}.
\end{aligned}
\end{equation*} Clearly, $A_1$ is $P_{20}$ and $A_2$ is $P_{16}$. It can be easily checked that all conditions mentioned in Theorem \ref{2decomp} are satisfied. These pairs of sets $A_1$ and $A_2$ are not hard for computers to find: for $n=20$, computer search shows that there are 48 such pairs.
\end{rek}

\begin{rek}\normalfont 
Our method of decomposing $I_r$ into two MSTD sets allows a lot of freedom in choosing the middle elements. This is because once the fringe elements are chosen, the conditions placed on $M_1$ and $M_2$ are relatively weak.
\end{rek}

Next, we answer positively question $(3)$ in \cite{AMMS}, where the authors ask: Can we decompose $\{1,2,\ldots,r\}$ into three sets which are MSTD? For any finite number $k$, is there a sufficiently large $r$ for which there is a $k$-decomposition into MSTD sets?

\begin{thm}\label{kdecomp}
Let $k\in\mathbb{N}_{\ge 2}$ be chosen.
\begin{enumerate}
\item There exists the smallest $R(k)\in\mathbb{N}$ such that for all $r\ge R(k)$, $I_r$ can be $k$-decomposed into MSTD subsets, while $I_{R(k)-1}$ cannot be $k$-decomposed into MSTD subsets.

\item In particular, we find some rough bounds\footnote{We make no attempt to optimize these bounds. Finer analysis may give us better bounds.}:
\begin{enumerate}
\item when $k$ is even, $8k\le R(k)\le 10k$,
\item when $k\ge 5$ odd, $8k\le R(k)\le 20k-14$, and
\item when $k=3$, $24\le R(k)\le 4T+24$,
\end{enumerate}
where $T=\min\{\max A: |A+A|-|A-A|\ge 10|A|\}$.
\end{enumerate}
\end{thm}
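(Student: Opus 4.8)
The plan is to prove the quantitative bounds of part~(2) first and then read off part~(1) from them. The lower bound $R(k)\geq 8k$, valid in all three regimes (and giving $24$ when $k=3$), I would get from the classical fact that every MSTD set of integers has at least $8$ elements: if $I_r=A_1\sqcup\cdots\sqcup A_k$ with each $A_i$ MSTD, then $r=\sum_{i=1}^k|A_i|\geq 8k$, so no $k$-decomposition exists once $r\leq 8k-1$. For part~(1), the upper-bound constructions below exhibit a $k$-decomposition of $I_r$ for every $r$ beyond an explicit threshold, so the set of admissible lengths contains a tail $[R,\infty)$; the well-ordering principle then produces a smallest admissible $R$, which is $R(k)$, and minimality forces $I_{R(k)-1}$ to have no $k$-decomposition, since otherwise $[R(k)-1,\infty)$ would be admissible too.

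The engine for the upper bounds is a \emph{gluing} principle: because $|A+A|$ and $|A-A|$ are translation-invariant, a partition of $[1,s]$ into $k_1$ MSTD sets and a partition of $[1,t]$ into $k_2$ MSTD sets combine, via the shift $C\mapsto C+s$ on the second, into a partition of $[1,s+t]$ into $k_1+k_2$ MSTD sets lying in the disjoint intervals $[1,s]$ and $[s+1,s+t]$. It therefore suffices to control two base cases. The case $k=2$ is supplied by Theorem~\ref{2decomp}, whose free parameter $m\in\mathbb{N}_0$ produces $2$-decompositions of $[1,t]$ for all $t$ above a fixed bound; such a block is \emph{flexible} in that it can absorb arbitrary extra length. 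For even $k$ I would glue $k/2$ two-decompositions---small fixed blocks together with one flexible block---so that the reachable total lengths form the tail $[10k,\infty)$, giving $R(k)\leq 10k$. For odd $k\geq 5$ I would write $k=3+2\cdot\tfrac{k-3}{2}$ and glue a fixed three-set gadget to $\tfrac{k-3}{2}$ two-decompositions (again keeping one flexible block to absorb length); a bookkeeping of the block sizes, using the explicit $2$-decomposition of Remark~\ref{Exp1}, yields the tail $[20k-14,\infty)$ and hence $R(k)\leq 20k-14$.

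The genuinely new ingredient, and the main obstacle, is the base case $k=3$: since $3$ is odd it cannot be assembled from $2$-decompositions, so one must build an honest partition of an interval into \emph{three} simultaneously MSTD sets. Here I would use a high-surplus seed, a set $A$ attaining $T=\min\{\max A:\,|A+A|-|A-A|\geq 10|A|\}$; the large excess of sums over differences is the resource that lets one distribute translated and reflected copies of $A$ among three parts so that each retains enough surplus to be MSTD, across an interval of length $O(T)$, after which a flexible middle in the spirit of the sets $M_1,M_2$ of Theorem~\ref{2decomp} extends the construction to every $r\geq 4T+24$. The delicate point---where odd parity really costs us---is to lay out the fringe so that all three sum-sets and all three difference-sets simultaneously contain their required central intervals while the three parts stay disjoint and exhaust $I_r$; this is exactly what forces the $k=3$ bound to depend on the searched quantity $T$ rather than on an absolute constant.
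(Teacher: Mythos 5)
Your lower bound ($r=\sum_i|A_i|\ge 8k$ from the $8$-element minimum of \cite{He}) and your tail-plus-well-ordering derivation of part (1) match the paper. The genuine gap is in your odd case $k\ge 5$. You propose gluing a fixed three-set gadget to $\tfrac{k-3}{2}$ two-decompositions, but the only three-decomposition available to you (and to the paper) is the $k=3$ construction, which lives on an interval of length at least about $4T+24$, where $T$ arises from $10$-strong sets built by base expansion and is enormous (the paper's example has $|S|=4096$). Gluing that gadget gives $R(k)\le 4T+24+C(k-3)$, not $20k-14$; your claim that ``bookkeeping of block sizes yields the tail $[20k-14,\infty)$'' is internally inconsistent with your own $k=3$ paragraph. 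The paper gets $20k-14$ by a structurally different device that avoids any three-gadget: the sets of Lemma \ref{threesetsforpartition}, e.g.\ $A=(1|1,1,2,1,4,\ldots,4,3,1,1,2)$, are single MSTD sets whose complement $I_r\setminus A$ is the union of two \emph{interleaved} arithmetic progressions (common differences $4$ and $2$) --- not consecutive subintervals --- each an affine image of an interval and hence $2$-decomposable by Theorem \ref{2decomp}. Partitioning these into $2j_1$ and $2j_2$ MSTD sets with $j_1,j_2\ge 1$ yields $1+2j_1+2j_2=k$ for every odd $k\ge 5$, with the progression lengths $m+2$ and $2m+1$ giving the bound $20k-14$; this parity trick (odd $=$ one $+$ even $+$ even) is the missing idea, and it also explains why $k=3$ must be handled separately (both progressions must genuinely be used).

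Two further problems. In the even case, to reach exactly $r=10k$ your $k/2$ blocks must be allowed length exactly $20$, i.e.\ you need a $2$-decomposition of $I_r$ for \emph{every} $r\ge 20$; Theorem \ref{2decomp} cannot supply this, since with the seed of Remark \ref{Exp1} ($n=20$, $k\ge n/2+2=12$) the total length $2n+m+4k+4$ is at least $92$ even at $m=0$, and $m$ must in fact be large for $M_1,M_2$ to exist. The paper instead imports from \cite{AMMS} the fact that $I_r$ splits into two MSTD sets for all $r\ge 20$; without that input your method only proves $R(k)\le Ck$ for a much larger constant $C$. Finally, your $k=3$ sketch names the right resource (the surplus $|S+S|-|S-S|\ge 10|S|$, which is exactly what powers Lemma \ref{strongMSTDplus}) but not the mechanism: the paper takes the Martin--O'Bryant fringe pair $L=\{1,3,4,8,9,10,11\}$, $R=\{r-10,\ldots,r\}$ from \cite{MO}, fills in the even middle elements to get one MSTD set robust under adding arbitrary leftover middle elements, locates two disjoint $10$-strong sets $S_1,S_2$ inside the remaining run of consecutive odd integers (Remark \ref{MSTDinarithmeticprog}), and absorbs the stray elements $\{2,5,6,7\}$ and $\{r-6,r-4,r-3\}$ into $S_1,S_2$ via Lemma \ref{strongMSTDplus}. ``Distributing translated and reflected copies of $A$ among three parts'' is not an argument --- translated copies of $A$ would not be disjoint from each other in the required residue classes, nor exhaust $I_r$ --- and it does not visibly terminate in the stated bound $4T+24$, which in the paper comes from requiring the odd run of length $\frac{r-26}{2}+1$ to contain two disjoint $10$-strong sets.
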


We prove Theorem \ref{kdecomp} using sets constructed by the base expansion method\footnote{We can generate an infinite family of MSTD sets from a given MSTD set through the base expansion method. Let $A$ be an MSTD set, and let $A_{k,m}=\{\sum_{i=1}^{k}a_im^{i-1}:a_i\in A\}$. If $m$ is sufficiently large, then $|A_{k,m}\pm A_{k,m}| = |A\pm
A|^k$ and $|A_{k,m}|=|A|^k$.} that helps generate an infinite family of MSTD sets from a given MSTD set. The method is a very powerful tool and has been used extensively in the literature including \cite{He},\cite{ILMZ1} and \cite{ILMZ2}. However, the base expansion method turns out to be inefficient in terms of our MSTD sets' cardinality. Hence, we present a second, more efficient approach by using a particular family of MSTD sets. We present both proofs since they are of independent interest: the first proof is less technical but less efficient. Also, the second proof cannot resolve the case $k=3$ while the first can.

Lastly, we give a sufficient condition on when there exists a positive constant lower bound for the proportion of $k$-decompositions of $I_r$ into MSTD subsets.
The condition offers an alternative proof of Theorem 1.4 in \cite{AMMS} ($k=2$). Due to the condition, we make the following conjecture.

\begin{conj}\label{conjallk}
For any finite $k\ge 2$, the proportion of $k$-decompositions into MSTD subsets is bounded below by a positive constant.
\end{conj}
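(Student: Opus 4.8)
The plan is to split the statement into an analytic half and a combinatorial half, exactly as the phrase ``sufficient condition'' in the preceding paragraph suggests, and to argue that the conjecture reduces entirely to the combinatorial half. Throughout, write $I_r=[1,n]\cup M\cup[r-n+1,r]$ with the two \emph{fringes} of width $n=n(k)$ fixed and the middle $M$ of size $r-2n$ to be assigned among the $k$ parts. A uniformly random $k$-decomposition of $I_r$ sends each element independently to one of the $k$ parts with probability $1/k$. The guiding picture, due to Martin--O'Bryant \cite{MO} and refined by Zhao \cite{Zh2} and Asada et al.\ \cite{AMMS}, is that the fringes are responsible for forcing each part to \emph{miss} some extreme differences while \emph{capturing} the corresponding sums, whereas the middle, being statistically generic, fills in the entire interior of both the sumset and the difference set. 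The analytic half says: if the fringes land in a prescribed good configuration and every part is $P_n$ on its middle, then every part is MSTD. The combinatorial half is the existence, for each $k$, of the good configuration itself.

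For the analytic half I would follow Zhao's technique verbatim. Fix $k$ pairwise disjoint fringe pairs $(L_1,R_1),\dots,(L_k,R_k)$ with $\bigcup_i L_i=[1,n]$ and $\bigcup_i R_i=[r-n+1,r]$, each chosen so that the part $A_i=L_i\cup M_i\cup R_i$ is MSTD whenever $A_i$ is $SP_n$ and $DP_n$. The probability that a random $k$-decomposition reproduces this exact assignment on the $2n$ fringe elements is $k^{-2n}$, a positive constant independent of $r$. Conditioned on that event, the middle $M_i$ of each part is a random subset of $M$ of density $1/k$; the number of representations of a fixed interior value as a sum (or difference) of two middle elements grows linearly away from the ends of the bulk, so the probability that some interior value of $A_i+A_i$ or $A_i-A_i$ is missing is at most $C\sum_{j\ge n}(1-k^{-2})^{cj}$, which tends to $0$ as $n\to\infty$ uniformly in $r$. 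A union bound over the $k$ parts makes the probability that \emph{all} parts are simultaneously $SP_n$ and $DP_n$ at least $1-k\,C\sum_{j\ge n}(1-k^{-2})^{cj}$, which exceeds $1/2$ once $n=n(k)$ is large. Multiplying by $k^{-2n}$ yields a positive constant lower bound on the proportion, and for $k=2$ this reproduces Theorem~1.4 of \cite{AMMS}.

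The combinatorial half is where the real work lies, and it is the step I expect to be the genuine obstacle. One must exhibit, for every $k\ge 2$, a single fringe width $n(k)$ and $k$ pairwise disjoint fringe pairs that \emph{tile} both fringes and each of which forces MSTD in the presence of a $P_n$ middle. For $k=2$ this is precisely Theorem~\ref{2decomp}: there $L_1,L_2$ partition the left fringe, the shifted right fringes partition the right end, and the symmetry-breaking blocks $[1,4]\cup\{n\}$, $\{n+1\}\cup[2n-3,2n]$ together with the outer blocks $O_{ij}$ create the missing differences. The difficulty for general $k$ is that all $k$ parts compete for the finitely many extreme positions (near $1$, $n$, $n+1$, and $2n$ in the unshifted picture) that are responsible for puncturing the difference set, so one cannot simply replicate a single gadget $k$ times. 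My approach would be to let $n(k)$ grow linearly in $k$ and to assign each part its own disjoint fringe gadget, modeled on the $O_{ij}$ blocks and placed at staggered offsets so that distinct gadgets neither overlap nor interfere with one another's sum and difference bookkeeping; the explicit MSTD families underlying Theorem~\ref{kdecomp} are the natural source of such gadgets. Proving that $k$ staggered gadgets can be packed into a fringe of width $O(k)$ while each remains MSTD-forcing, uniformly in $k$, is the delicate and as-yet-unresolved point, and is exactly why the statement is recorded here as a conjecture rather than a theorem.
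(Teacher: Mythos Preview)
The statement is labeled a conjecture in the paper and is not proved there; there is no ``paper's own proof'' to compare against. Your write-up correctly recognizes this in its final paragraph. What the paper does supply is Theorem~\ref{sufcon}, which is precisely your ``analytic half'': given $k$ pairwise disjoint MSTD, $P_n$ fringe pairs $(L_i,R_i)$ partitioning $[0,2n-1]$, if $1-\sum_{i=1}^k f(L_i,R_i)>0$ then a positive proportion of $k$-decompositions have all parts MSTD. The paper verifies this numerically for $k=2$ with explicit fringe sets (the corollary following Theorem~\ref{sufcon}) and records general $k$ as Conjecture~\ref{conjallk} and as question~(1) in the Future Work section.

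Your outline tracks the paper's framework accurately. Your analytic estimate $1-kC\sum_{j\ge n}(1-k^{-2})^{cj}$ is a density-$1/k$ analogue of the paper's $1-\sum_i f(L_i,R_i)$; the paper's bound is finer because $f(L_i,R_i)$ depends on the actual structure of each fringe (the quantities $a_i$, $\tau(L_i)$, $\tau(R_i)$ in Lemma~\ref{taudefi}) rather than on a uniform density assumption, but both versions go to zero for suitable fringes. The combinatorial obstacle you single out---packing $k$ disjoint MSTD-forcing fringe gadgets whose union tiles both ends while keeping each $f(L_i,R_i)$ small---is exactly the piece the paper leaves open, so your assessment of where the difficulty lies agrees with the authors'.
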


The outline of the paper is as follows. In Section \ref{explicit2}, we provide an efficient method to decompose $I_r$ into two MSTD subsets; Section \ref{explicitk} presents two methods to decompose $I_r$ into $k\ge 4$ MSTD subsets; Appendix \ref{ApenR} is devoted to establishing the bounds mentioned in Theorem \ref{kdecomp} and the sufficient condition for a positive constant lower bound of the proportion of $k$-decompositions into MSTD subsets in Appendix \ref{Apensuf}. Appendix \ref{Apen34} gives a proof of Theorem \ref{threesetsforpartition}. Appendix \ref{Apenex} contains many examples illustrating our lemmas and theorems. 


\section{Explicit 2-decomposition into MSTD Subsets}\label{explicit2}
In this section, we show how we can decompose $I_r$ into two MSTD subsets. We believe that the method can be applied to the general case of $k$-decompositions, but the proof will be much more technical. However, for $k\ge 4$, we have a way to decompose $I_r$ into $k$ MSTD subsets by simply using $2$-decompositions, which will be discussed later.


\subsection{Explicit Construction of Infinite Families of MSTD sets}
The following lemma is useful in proving many of our results.

\begin{lem}\label{P_nsum}
Let $A=L\cup R$ be an MSTD, $P_n$ set containing $1$ and $2n$, where $L\subseteq [1,n]$ and $R\subseteq [n+1,2n]$. Form $A'=L\cup M\cup R'$, where $M\subseteq [n+1,n+m]$ and $R'=R+m$ for some $m\in\mathbb{N}_0$. If $A'$ is a $SP_n$ set, then $A'$ is MSTD.
\end{lem}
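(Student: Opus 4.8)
The plan is to pass from $A$ to $A'$ and track exactly how the two statistics change: I claim that $|A'+A'| = |A+A| + 2m$ while $|A'-A'| \le |A-A| + 2m$, so that the strict inequality $|A+A| > |A-A|$ (which holds because $A$ is MSTD) is inherited by $A'$. First I would record the extremes. Since $1 \in L$ and $2n \in R$, we have $\min A' = 1$ and $\max A' = 2n+m$, so the hypothesis that $A'$ is $SP_n$ reads $A'+A' \supseteq [n+2,\, 3n+2m]$, while $A$ being $P_n$ gives $A+A \supseteq [n+2,3n]$ and $A-A \supseteq [1-n,n-1]$.

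For the sumsets I would cut at the two ends. Every element of $A'$ outside $L$ is at least $n+1$, so any sum below $n+2$ must come from $L+L$; thus the low part of $A'+A'$ is $(L+L)\cap[2,n+1]$, identical to the low part of $A+A$. Symmetrically, every element of $A'$ outside $R'$ is at most $n+m$, so any sum above $3n+2m$ must come from $R'+R'$; since $R'+R' = (R+R)+2m$, the high part $(R'+R')\cap[3n+2m+1,\,4n+2m]$ has the same cardinality as the high part $(R+R)\cap[3n+1,4n]$ of $A+A$. The full middle intervals $[n+2,3n+2m]$ and $[n+2,3n]$ are present in both cases, of lengths $2n+2m-1$ and $2n-1$ respectively. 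Adding the three contributions gives $|A'+A'| = |A+A| + 2m$ exactly.

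For the difference sets I would argue the same way, using symmetry and cutting the positive part at $n+m$. Writing $d = |(R-L)\cap[n,2n-1]|$, the large positive differences of $A$ are exactly these $d$ values, so $|A-A| = 2(n-1)+2d+1$. In $A'$, any positive difference of size at least $n+m$ must come from $R'-L = (R-L)+m$ (every other pair of elements of $A'$ differs by at most $n+m-1$), so these also number exactly $d$; the remaining positive differences lie in $[1,n+m-1]$ and hence number at most $n+m-1$. Therefore $|A'-A'| \le 2(n+m-1)+2d+1 = |A-A|+2m$.

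Combining the two counts, $|A'+A'| = |A+A|+2m > |A-A|+2m \ge |A'-A'|$, where the strict middle inequality is exactly the MSTD hypothesis on $A$; hence $A'$ is MSTD. The crux of the argument — and the only asymmetry to watch — is that the $SP_n$ hypothesis forces the middle of the sumset to be \emph{completely} filled, contributing a full $2m$ new sums, whereas the middle of the difference set is merely bounded above by the length of its interval, contributing \emph{at most} $2m$ new differences. The shift $R' = R+m$ is what guarantees that the extreme (fringe) contributions to both statistics are carried over unchanged, so that all of the change is concentrated in these middle intervals.
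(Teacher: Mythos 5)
Your proposal is correct and follows essentially the same route as the paper's proof: the increase in sums is exactly $2m$ (fringe sums from $L+L$ and $R'+R'=(R+R)+2m$ carried over, middle $[n+2,3n+2m]$ filled by the $SP_n$ hypothesis), while the increase in differences is at most $2m$ (large differences in bijection with those of $A$ via the shift $R'=R+m$, and the middle $[-(n+m)+1,n+m-1]$ at worst completely filled since $A-A\supseteq[1-n,n-1]$ by the $P_n$ property). The only difference is cosmetic: the paper cites Lemma 2.1 of \cite{MOS} for the count of $2m$ new sums, whereas you rederive it inline, and you make the difference count explicit via $|A-A|=2(n-1)+2d+1$ with $d=|(R-L)\cap[n,2n-1]|$.
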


\begin{proof}
We prove that $A'$ is MSTD by showing that the increase in the number of differences is at most the increase in the number of sums. As shown in the proof of Lemma 2.1 in \cite{MOS}, the number of new added sums is $2m$. Because $R'=R+m$, all differences in $[-(2n+m-1),-(n+m)]$ can be paired up with differences in $[1-2n,-n]$ from $L-R$ and differences in $[n+m,2n+m-1]$ can be paired up with differences in $[n,2n-1]$ from $R-L$. Because the set $A$ is $P_n$, $A$ contains all numbers in $[-n+1,n-1]$. In the worst scenario (in terms of the increase in the number of differences), $A'-A'$ contains all differences in $[-(n+m)+1,(n+m)-1]$. So, at most $|A'-A'|-|A-A|=|[-(n+m)+1,(n+m)-1]|-|[-n+1,n-1]|=2m$. This completes our proof.
\end{proof}

\begin{cor}\label{fixlem}
Let $A = L\cup R$ be an MSTD, $P_{n-4}$ set containing $5$ and $2n-4$, where $L\subseteq [5,n]$ and $R\subseteq [n+1,2n-4]$. Form $A' = L\cup M\cup R'$, where $M\subseteq [n+1,n+m]$ and $R' = R+m$ for some $m\in \mathbb{N}_0$. If $A'$ is a $SP_{n-4}$ set, then $A'$ is MSTD.
\end{cor}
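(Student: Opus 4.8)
The plan is to deduce this corollary directly from Lemma \ref{P_nsum} by translating the entire configuration down by $4$, so that translation invariance does all of the work. Concretely, I would set $N = n-4$ and define $B = A - 4$ and $B' = A' - 4$. The point is that the corollary's hypotheses are exactly the lemma's hypotheses for the translated sets $B, B'$, read with the lemma's parameter taken to be $N$: subtracting $4$ aligns the interval $[5,\cdot]$ with $[1,\cdot]$ and replaces $n$ by $n-4$ everywhere.

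First I would check the fringe data. Writing $L_B = L - 4$ and $R_B = R - 4$, the containments $L \subseteq [5,n]$ and $R \subseteq [n+1,2n-4]$ become $L_B \subseteq [1,N]$ and $R_B \subseteq [N+1,2N]$ (using $2N = 2n-8$), while ``$A$ contains $5$ and $2n-4$'' becomes ``$B$ contains $1$ and $2N$.'' Likewise $M_B = M - 4 \subseteq [N+1,N+m]$ and $R_B' = R' - 4 = R_B + m$, so $B' = L_B \cup M_B \cup R_B'$ has precisely the shape required by the lemma.

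Next I would verify the structural properties. Since $B - B = A - A$ and $B + B = (A+A) - 8$, the $P_{n-4}$ property of $A$ transfers to $B$ with the same parameter, so $B$ is $P_N$; likewise $B$ is MSTD because translation preserves both $|A+A|$ and $|A-A|$. The one point to watch is the sum-fringe hypothesis: the lemma needs $B'$ to be $SP_N$, and this is exactly the corollary's assumption that $A'$ is $SP_{n-4}$, since the property $SP_p$ is preserved verbatim under translation (the window $[2\min + p,\, 2\max - p]$ and the sumset shift by the same constant, leaving the parameter $p$ unchanged).

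With all hypotheses of Lemma \ref{P_nsum} in force for $B$ and $B'$ at parameter $N = n-4$, the lemma yields that $B'$ is MSTD, and therefore $A' = B' + 4$ is MSTD as well. The only bookkeeping is confirming the endpoint arithmetic of the four translated intervals, and I expect no genuine difficulty there, since the corollary is engineered so that the single shift by $4$ converts it into an instance of the lemma. (Alternatively, one could repeat the lemma's counting argument verbatim with the shifted fringe, pairing the $2m$ new extreme differences from $L-R'$ and $R'-L$ against the $2m$ new sums, but the translation reduction is cleaner and avoids redoing the estimate.)
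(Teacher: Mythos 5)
Your proof is correct and takes exactly the same route as the paper: the paper's entire proof of this corollary is the one-liner ``In Lemma \ref{P_nsum}, we use $n-4$ instead of $n$, then consider $4+A$,'' which is precisely your translation-by-$4$ reduction with parameter $N=n-4$. You have merely written out the endpoint bookkeeping (fringe containments, preservation of $P_{n-4}$, MSTD, and $SP_{n-4}$ under translation) that the authors left implicit.
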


\begin{proof}
In Lemma \ref{P_nsum}, we use $n-4$ instead of $n$, then consider $4+A$.
\end{proof}

\begin{lem}\label{ft}
Let an MSTD, $P_n$ set $A$ be chosen, where $A=L\cup R$ for $L\subseteq [1,n]$ and $R\subseteq [n+1,2n]$. Additionally, $L$ and $R$ must satisfy the following conditions: $[1,4]\cup\{n\}\subseteq L$ and $\{n+1\}\cup[2n-3,2n]\subseteq R$. Pick $k\ge n/2+2$ and $m\in\mathbb{N}_0$. Form \begin{align*}O_1\ &=\ \{n+4\}\cup[n+5,n+2k+1]_2\cup\{n+2k+2\}\\ O_2\ &=\ \{n+m+2k+3\}\cup[n+m+2k+4,n+m+4k]_2\cup\{n+m+4k+1\}.\end{align*} Let $M\subseteq [n+2k+3,n+m+2k+2]$ be such that within $M$, there exists a sequence of pairs of consecutive elements, where consecutive pairs in the sequence are not more than $2k-1$ apart and the sequence starts with a pair in $[n+2k+3,n+4k+1]$ and ends with a pair in $[n+m+4,n+m+2k+2]$. Denote $A'=L\cup O_1\cup M\cup O_2\cup R'$, where $R'=R+m+4k+4$. Then $A'$ is MSTD.
\end{lem}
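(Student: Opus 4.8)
The plan is to invoke Lemma~\ref{P_nsum} with $M$ replaced by the full middle block $O_1 \cup M \cup O_2$, so that the only thing I must verify directly is that $A' = L \cup O_1 \cup M \cup O_2 \cup R'$ is an $SP_n$ set; once $A'$ is shown to be $SP_n$, Lemma~\ref{P_nsum} hands me the MSTD conclusion, because $A = L \cup R$ is already MSTD and $P_n$ and contains $1$ and $2n$ by the hypotheses of this lemma together with $[1,4] \subseteq L$ and $[2n-3,2n] \subseteq R$. Thus the entire difficulty is funneled into the sum side: I need $A' + A' \supseteq [2 + n, 2(2n+m+4k+4) - n]$, i.e.\ I must show that $A'$ realizes every sum in a long central interval that is short by exactly $n$ on each end.

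First I would compute $\min A' = 1$ and $\max A' = \max R' = 2n + m + 4k + 4$, so the target interval for $SP_n$ is $[n+2,\, 3n + 2m + 8k + 8]$. The strategy for covering this interval is to partition it into overlapping sub-ranges and exhibit, for each target value $s$, an explicit pair of elements of $A'$ summing to $s$. The low end of the range is covered by sums $L + L$, $L + O_1$, and internal sums of $O_1$; here the presence of $[1,4] \cup \{n\}$ in $L$ and the comb structure of $O_1$ (two solid endpoints $n+4$ and $n+2k+2$ bracketing an even-spaced set $[n+5,n+2k+1]_2$) is what fills consecutive integers, since adding $L$'s small run $\{1,2,3,4\}$ to the even comb catches both parities. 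The top end is covered symmetrically by $R' + R'$, $R' + O_2$, and internal sums of $O_2$, using $[2n-3,2n] \subseteq R$ (so $R'$ contains a length-$4$ solid run at its top) against the comb $O_2$. The genuinely delicate middle is covered by cross sums involving $M$: this is exactly where the sequence-of-consecutive-pairs condition on $M$ does its work.

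The hard part will be the middle interval, and specifically verifying that the spacing constraints on $M$ guarantee no gaps in $A' + A'$. The mechanism is that $M$ contains a chain of consecutive pairs $\{x, x+1\}$ whose successive members are at most $2k-1$ apart; adding such a pair to the even comb of $O_1$ (whose elements step by $2$ and span a width of about $2k$) produces, for each pair, a solid run of sums of length comparable to the comb's span, and the $2k-1$ spacing bound is precisely calibrated so that the run generated by one pair overlaps the run generated by the next, leaving no gap as we march $x$ across $[n+2k+3, n+m+2k+2]$. I would make this quantitative by fixing a comb element $n + 2j$ and a pair member $x$, noting that $\{x + (n+2j) : j\}$ fills an arithmetic-progression-with-step-$2$ of length $\sim k$, while the companion element $x+1$ shifts parity so the union $\{x, x+1\} + O_1$ is a solid integer run; then checking that consecutive pairs, being within $2k-1$, keep these solid runs abutting. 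The endpoint conditions (the chain starts with a pair in $[n+2k+3,n+4k+1]$ and ends in $[n+m+4,n+m+2k+2]$) are what splice this middle band onto the $O_1$-driven low band and the $O_2$-driven high band, and I would verify the two seams by a direct interval-arithmetic check. The remaining bookkeeping---listing the exact arithmetic progressions produced by each cross-sum and confirming the global union is the full interval $[n+2, 3n+2m+8k+8]$---is routine but tedious, and I would organize it as a sequence of containments rather than a single computation.
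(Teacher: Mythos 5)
Your overall frame is the paper's: reduce MSTD to the $SP_n$ property via Lemma~\ref{P_nsum} (applied with the middle block $O_1\cup M\cup O_2$ and shift $m+4k+4$), then cover $[n+2,\,3n+2m+8k+8]$ by explicit sum bands; and your mechanism for $M+O_1$ is exactly right (a pair $\{x,x+1\}$ plus the comb gives the solid run $[x+n+4,\,x+n+2k+3]$ of length $2k$, and the $2k-1$ spacing keeps successive runs abutting), as is the low-end bookkeeping, where $(\{1,2\}+O_1)=[n+5,n+2k+4]$ meets $O_1+O_1=[2n+8,2n+4k+4]$ precisely because $k\ge n/2+2$. However, your covering scheme has a genuine hole in the upper middle. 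The guaranteed top of your middle band $M+O_1$ is $(n+m+5)+(n+2k+2)=2n+m+2k+7$ (even with the most favorable $M$ it is at most $2n+m+4k+4$), while the smallest sum obtainable from your ``$O_2$-driven high band'' ($R'+R'$, $R'+O_2$, $O_2+O_2$) is $2\min O_2=2n+2m+4k+6$. So one of the two ``seams'' you propose to close by a direct interval-arithmetic check is in fact an uncovered interval of width roughly $m+2k$, growing without bound in $m$; no bounded check closes it with the sum families you list. The missing ingredients are two further cross-sum families: $O_1+O_2$, whose consecutive pairs $\{n+4,n+5\}$ and $\{n+2k+1,n+2k+2\}$ added to $O_2$ yield the solid interval $[2n+m+2k+7,\,2n+m+6k+3]$, and $M+O_2$, where the same pair-chain mechanism produces runs $[x+n+m+2k+3,\,x+n+m+4k+2]$ sweeping (in the worst case) from $2n+m+6k+3$ up to $2n+2m+4k+6$, splicing exactly onto $O_2+O_2$.

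The paper sidesteps half of this work with a symmetry observation you could also adopt: the reflection $x\mapsto(2n+m+4k+5)-x$ carries the construction's hypotheses to themselves ($O_1\leftrightarrow O_2$, the fringe conditions $[1,4]\cup\{n\}\subseteq L$ and $\{n+1\}\cup[2n-3,2n]\subseteq R$ swap, and the pair-chain condition on $M$ has mirror-symmetric endpoint windows), so it suffices to prove $[n+2,\,2n+m+4k+5]\subseteq A'+A'$. But note that even this half already requires the cross band $((n+4)+O_2)\cup((n+5)+O_2)=[2n+m+2k+7,\,2n+m+4k+6]$, so $O_1+O_2$ is indispensable either way: your plan must be amended to include it, and, if you forgo the symmetry reduction, to include $M+O_2$ as well. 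With those two bands added, your argument goes through and is essentially the paper's.
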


\begin{proof}
We know that $A'\subseteq [2,4n+2m+8k+8]$. To prove that $A'$ is MSTD, it suffices to prove that $A'$ is $SP_n$. In particular, we want to show that $[n+2,3n+2m+8k+8]\subseteq A'+A'$. Due to symmetry\footnote{Due to symmetry, $A'$ has the same structure as $(2n+m+4k+5)-A'$. If $[n+2,2n+m+4k+5]\subseteq A'+A'$, then $[n+2,2n+m+4k+5]\subseteq (2n+m+4k+5-A')+(2n+m+4k+5-A')=(4n+2m+8k+10)-(A'+A')$ and so, $[2n+m+4k+5,3n+2m+8k+8]\subseteq A'+A'$.}, it suffices to show that $[n+2,2n+m+4k+5]\subseteq A'+A'$. We have:
\begin{align*}
[n+2,n+4]\ &\subseteq \ A'+A'\quad (\text{because }2,3,4,n\in A')\\
(1+O_1)\cup (2+O_1)\ &=\ [n+5,n+2k+4]\\
O_1+O_1\ &=\ [2n+8,2n+4k+4].\\
\end{align*}
Since $n+2k+4\ge 2n+8$, $[n+2,2n+4k+4]\subseteq A'+A'$. Consider $M+O_1$. In the worst scenario (in terms of getting necessary sums), the two smallest elements of $M$ are $n+4k$ and $n+4k+1$, while the two largest elements of $M$ are $n+m+4$ and $n+m+5$. We have $M+O_1\supseteq [2n+4k+4,2n+m+2k+7]$. We complete the proof by showing that $[2n+m+2k+8,2n+m+4k+5]\subseteq A'+A'$. We have $((n+4)+O_2)\cup ((n+5)+O_2)=[2n+m+2k+7,2n+m+4k+6]$. So, $A'$ is $SP_n$ and thus, MSTD by Lemma \ref{P_nsum}.
\end{proof}

\begin{lem}\label{ss}
Let an MSTD, $P_{n-4}$ set $A$ be chosen, where $A=L\cup R$ for $L\subseteq [5,n]$ and $R\subseteq [n+1,2n-4]$. Additionally, $L$ and $R$ must satisfy the following conditions: $[5,7]\subseteq L$ and $[2n-6,2n-4]\subseteq R$. Pick $k\ge n/2-5$ and $m\in\mathbb{N}_0$. Form \begin{align*}
O_1\ &=\ [n+1,n+3]\cup[n+6,n+2k]_2\cup[n+2k+3,n+2k+5],\\
O_2\ &=\ [n+m+2k,n+m+2k+2]\cup[n+m+2k+5,n+m+4k-1]_2\\
&\qquad\qquad\cup[n+m+4k+2,n+m+4k+4].
\end{align*}
Let $M\subseteq [n+2k+6,n+m+2k-1]$ such that within $M$, there exists a sequence of triplets of consecutive elements, where consecutive triplets in the sequence are not more than $2k+5$ apart and the sequence starts with a triplet in $[n+2k+6,n+4k+5]$ and ends with a triplet in $[n+m,n+m+2k-1]$. Denote $A'=L\cup O_1\cup M\cup O_2\cup R'$,
where $R'=R+m+4k+4$. Then $A'$ is MSTD.
\end{lem}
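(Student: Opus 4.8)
The plan is to mirror the proof of Lemma \ref{ft}, substituting Corollary \ref{fixlem} for Lemma \ref{P_nsum} since here $A$ is only $P_{n-4}$. First I would note that the middle block $O_1\cup M\cup O_2$ lies in $[n+1,n+m+4k+4]$ and that $R'=R+(m+4k+4)$; writing $m'=m+4k+4$, we are exactly in the situation of Corollary \ref{fixlem} with shift $m'$ and combined middle set $O_1\cup M\cup O_2\subseteq[n+1,n+m']$. Hence it suffices to prove that $A'$ is $SP_{n-4}$. Since $\min A'=5$ (because $[5,7]\subseteq L\subseteq[5,n]$) and $\max A'=2n+m+4k$ (the top of $R'$), being $SP_{n-4}$ means $A'+A'\supseteq[n+6,3n+2m+8k+4]$. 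As in Lemma \ref{ft}, $A'$ has the same structure as $(2n+m+4k+5)-A'$, so $A'+A'$ is symmetric about $2n+m+4k+5$ and it is enough to cover the lower half $[n+6,2n+m+4k+5]$.

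Second, I would build this interval as a chain of overlapping sumsets read left to right. The opening piece comes from the three consecutive small elements $\{5,6,7\}\subseteq L$: added to the bottom block $[n+1,n+3]$ of $O_1$, then to the even-spaced block $[n+6,n+2k]_2$ and the top block $[n+2k+3,n+2k+5]$, they fill an initial interval reaching roughly $n+2k+12$, since three consecutive summands bridge the gaps of $2$ in the even-spaced part. Next, $O_1+O_1$, whose smallest sum is $2(n+1)$, extends coverage upward to about $2n+4k+10$; the hypothesis $k\ge n/2-5$ is precisely what forces $n+2k+12$ to reach back past $2n+2$, so these two blocks overlap with no gap.

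Third, the middle region is handled by $M+O_1$. The key observation is that $O_1$ spans $[n+1,n+2k+5]$ and that adding any triplet $\{t,t+1,t+2\}$ of consecutive elements of $M$ to $O_1$ yields the \emph{full} interval $[t+n+1,\,t+n+2k+7]$, the triplet again closing the gaps of $2$ in $[n+6,n+2k]_2$. The hypothesis that consecutive triplets of $M$ are at most $2k+5$ apart then guarantees that these intervals overlap, so $M+O_1$ is an unbroken interval running from the first triplet (in $[n+2k+6,n+4k+5]$) up to $2n+m+4k+4$ at the last triplet (in $[n+m,n+m+2k-1]$). Finally, adding the three consecutive elements $n+1,n+2,n+3$ of $O_1$ to $O_2$ produces an interval covering roughly $[2n+m+2k+1,\,2n+m+4k+7]$, which overlaps the top of $M+O_1$ and runs past the center $2n+m+4k+5$. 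Stitching these four blocks together covers $[n+6,2n+m+4k+5]$, so $A'$ is $SP_{n-4}$ and hence, by Corollary \ref{fixlem}, MSTD.

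The main obstacle will be the bookkeeping at the three seams: verifying that the block from $\{5,6,7\}+O_1$ together with $O_1+O_1$ meets the block from $M+O_1$, and that the latter meets the block from $(n+1,n+2,n+3)+O_2$, each with no missing integer. Every seam collapses to a single inequality in $n,k,m$ of the same flavor as $k\ge n/2-5$, and the one genuinely new ingredient relative to Lemma \ref{ft} is the triplet-spacing hypothesis with gap bound $2k+5$, which replaces the pair structure and gap bound $2k-1$ used there precisely because the denser three-block form of $O_1$ requires triplets rather than pairs to fill its internal gaps.
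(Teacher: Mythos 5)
Your proposal is correct and follows essentially the same route as the paper: reduce to showing $A'$ is $SP_{n-4}$ via Corollary \ref{fixlem}, use the symmetry of $A'$ about $2n+m+4k+5$ to restrict to $[n+6,2n+m+4k+5]$, and stitch together the same four sumset blocks $(\{5,6,7\}+O_1)=[n+6,n+2k+12]$, $O_1+O_1=[2n+2,2n+4k+10]$ (glued by $k\ge n/2-5$), $M+O_1\supseteq[2n+4k+4,2n+m+2k+7]$ via the triplet-spacing condition, and $(\{n+1,n+2,n+3\}+O_2)=[2n+m+2k+1,2n+m+4k+7]$. One minor imprecision: the guaranteed (worst-case) top of $M+O_1$ is $2n+m+2k+7$, not $2n+m+4k+4$, but since the $O_2$ block starts at $2n+m+2k+1$ the seam closes exactly as in the paper.
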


\begin{proof}
We want to show that $A'$ is $SP_{n-4}$. By Corollary \ref{fixlem}, we know that $A'$ is MSTD. In particular, we prove that $[n+6, 3n+2m+8k+4]\in A'+A'$. It suffices to prove that $[n+6,2n+m+4k+5]\subseteq A'+A'$.\footnote{Due to symmetry, $A'$ has the same structure as $(2n+m+4k+5)-A'$. If $[n+6,2n+m+4k+5]\subseteq A'+A'$, then $[n+6,2n+m+4k+5]\subseteq (2n+m+4k+5-A')+(2n+m+4k+5-A')=(4n+2m+8k+10)-(A'+A')$ and so, $[2n+m+4k+5,3n+2m+8k+4]\subseteq A'+A'$.} We have
\begin{align*}
(5+O_1)\cup (6+O_1)\cup (7+O_1)\ &=\ [n+6,n+2k+12]\\
O_1+O_1\ &=\ [2n+2,2n+4k+10].
\end{align*}
Because $n+2k+12\ge 2n+2$, $A'+A'$ contains $[n+6,2n+4k+10]$. Consider $M+O_1$. In the worst scenario (in terms of getting sums), the smallest elements in $M$ are $n+4k+3, n+4k+4$ and $n+4k+5$, while the largest elements in $M$ are $n+m,n+m+1$ and $n+m+2$. Then $M+O_1\supseteq [2n+4k+4,2n+m+2k+7]$. We only need to show that $[2n+m+2k+8,2n+m+4k+5]\subseteq A'+A'$. We have $((n+1)+O_2)\cup ((n+2)+O_2)\cup ((n+3)+O_2)=[2n+m+2k+1,2n+m+4k+7]$. This completes our proof that $A'$ is $SP_{n-4}$ and thus, MSTD.
\end{proof}

\subsection{Explicit Partitions into Two MSTD Sets}
We are now ready to prove Theorem \ref{2decomp}. The proof follows from Lemma \ref{ft} and Lemma \ref{ss}.

\begin{proof}
As indicated in Remark \ref{Exp1}, there exist pairs of sets $A_1$ and $A_2$ such that all conditions in Theorem \ref{2decomp} are satisfied. Pick $k\ge n/2+2$ and $m$ sufficiently large. Set $R_i'=R_i+m+4k+4$ for $i=1,2$. Form
\begin{align*}
O_{11}\ &=\ \{n+4\}\mbox{ }\cup\mbox{ }[n+5,n+2k+1]_2\mbox{ }\cup\mbox{ }\{n+2k+2\},\\
O_{12}\ &=\ \{n+m+2k+3\}\mbox{ }\cup\mbox{ }[n+m+2k+4,n+m+4k]_2\mbox{ }\cup\mbox{ }\{n+m+4k+1\},\\
O_{21}\ &=\ [n+1,n+3]\mbox{ }\cup\mbox{ }[n+6,n+2k]_2\mbox{ }\cup\mbox{ }[n+2k+3,n+2k+5],\\
O_{22}\ &=\ [n+m+2k,n+m+2k+2]\mbox{ }\cup\mbox{ }[n+m+2k+5,n+m+4k-1]_2\\
&\qquad\qquad\mbox{ }\cup\mbox{ }[n+m+4k+2,n+m+4k+4].
\end{align*}
We see that $O_{11}\cup O_{21}=[n+1,n+2k+5]$ and $O_{12}\cup O_{22}=[n+m+2k,n+m+4k+4]$. By Lemma \ref{ft} and Lemma \ref{ss}, we know that $A_1'=L_1\cup O_{11}\cup M_1\cup O_{12}\cup R'_1$ and $A_2'=L_2\cup O_{21}\cup M_2\cup O_{22}\cup R'_2$ are MSTD sets and $(A_1',A_2')$ partitions $[1,2n+m+4k+4]$, given that the following three conditions are satisfied:
\begin{enumerate}
\item $M_1\subseteq [n+2k+6,n+m+2k-1]$ such that within $M_1$, there exists a sequence of pairs of consecutive elements, where consecutive pairs in the sequence are not more than $2k-1$ apart and the sequence starts with a pair in $[n+2k+6,n+4k+1]$ and ends with a pair in $[n+m+4,n+m+2k-1]$;
\item $M_2\subseteq [n+2k+6,n+m+2k-1]$ such that within $M_2$, there exists a sequence of triplets of consecutive elements, where consecutive triplets in the sequence are not more than $2k+5$ apart and the sequence starts with a triplet in $[n+2k+6,n+4k+5]$ and ends with a triplet in $[n+m,n+m+2k-1]$; and
\item $M_1\cup M_2=[n+2k+6,n+m+2k-1]$ and $M_1\cap M_2=\emptyset$.
\end{enumerate}
For $m$ sufficiently large, it is obvious that $M_1$ and $M_2$ exist. This completes the proof of the theorem.
\end{proof}

\begin{rek}\normalfont
Observe that our fringe pairs in this case are $(L_1\cup O_{11},R_1'\cup O_{12})$ and $(L_2\cup O_{21}, R_2'\cup O_{22})$. Though disjoint, the union of the two fringe pairs gives us a full set of left and right elements of $[1,2n+m+4k+4]$ and each is a fringe pair for an MSTD set.
\end{rek}


\section{Explicit $k$-decomposition into MSTD Subsets}\label{explicitk}


\subsection{Overview}\label{overview}
Theorem \ref{2decomp} gives us a way to partition $I_r$ into two MSTD subsets. Due to linear transformations, we can partition any (long enough) arithmetic progressions into two MSTD subsets. If we can find an MSTD subset $S$ of $I_r$ such that $I_r\backslash S$ is a union of $k$ arithmetic progressions $\cup_{j=1}^kP_j$, then we can partition $I_r$ into $1+2k$ MSTD subsets (because each $P_j$ can be partitioned into two MSTD subsets). This is the central idea in both methods we use to $k$-decompose $I_r$ into MSTD subsets presented later.


\subsection{Base Expansion Method ($k\ge 3$)}
We explicitly provide a way to $k$-decompose $I_r$ into MSTD subsets. First, we need to define a ``strong MSTD'' set.
We call a set $S$ a $10$-strong MSTD set if $|S+S|-|S-S|\ge 10|S|$.\footnote{We pick the number 10 just to be safe for our later arguments. We make no attempt to provide an efficient way to decompose $I_r$ into $k$ MSTD subsets. 
}
This type of MSTD set does exist. For example, using the base expansion method, we can construct such a set. Using $\tilde{S}=\{0,2,3,4,7,11,12,14\}$, by the method, we can construct $S$ such that $|S|=|\tilde{S}|^2=8^4=4096$, $|S+S|=26^4=456976$ and $|S-S|=25^4=390625$; then, $|S+S|-|S-S|>10|S|$.

\begin{lem} \label{strongMSTDplus}
If $S$ is a $10$-strong MSTD set, then $S\cup\{a_1,a_2,a_3,a_4\}$, where $a_4>a_3>a_2>a_1>\max S$, is an MSTD set. Similarly, $S\cup\{b_1,b_2,b_3,b_4\}$, where $b_1<b_2<b_3<b_4<\min S$, is also an MSTD set.
\end{lem}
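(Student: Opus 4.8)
The goal is to prove that adding four new elements above (or symmetrically below) a $10$-strong MSTD set $S$ preserves the MSTD property. The plan is to track separately how much the sumset and the difference set can grow, and show the sumset gains strictly more than the difference set can. I would work with $S^{+} = S\cup\{a_1,a_2,a_3,a_4\}$ where $a_1<a_2<a_3<a_4$ all exceed $\max S$; the case of appending four small elements follows by the symmetry $A\mapsto c-A$, which preserves both $|A+A|$ and $|A-A|$, so I would prove only the first assertion and invoke this reflection for the second.

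First I would count the new sums. The sums lost are none; the sums gained are exactly those involving at least one $a_i$. These split into the ``mixed'' sums $a_i + s$ with $s\in S$ and the ``pure'' sums $a_i + a_j$ with $1\le i\le j\le 4$. Since every $a_i > \max S$, all pure sums $a_i+a_j$ exceed $\max S + \max(S)$ and in fact exceed any sum already in $S+S$; moreover the pure sums occupy values strictly larger than the mixed sums can reach only partially, so I would be careful to count a guaranteed lower bound on genuinely new sum-values rather than an exact count. The cleanest route is to produce a crude but safe lower bound: the pure sums $a_i+a_j$ form at least a handful of values all lying above $\max S + \max S$ (hence new), and I expect on the order of a constant number (at most $10$) of these, independent of $|S|$.

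Next I would bound the growth of the difference set. New differences are those involving some $a_i$: the mixed differences $a_i - s$ and $s - a_i$ for $s\in S$, and the pure differences $a_i - a_j$. Crucially, the pure differences $a_i-a_j$ with $i\neq j$ are small in absolute value and may well already lie in $S-S$, while the mixed differences come in the symmetric pairs $\pm(a_i-s)$. The key counting observation is that the number of new difference-values is at most a fixed constant multiple of the number of new elements — bounded by something like $2\cdot 4 \cdot (\text{range terms})$, but critically \emph{not} growing with $|S|$ beyond a fixed linear-in-four bound, i.e. at most some absolute constant $C$ (I would verify $C\le 10$ suffices). This is exactly where the ``$10$-strong'' hypothesis earns its keep: we have the slack $|S+S|-|S-S|\ge 10|S|\ge 40$, a buffer large enough to absorb any constant-sized deficit between new sums and new differences.

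The main obstacle, and the step deserving the most care, is making the two counts genuinely comparable: I must show (new sums gained) $-$ (new differences gained) $> -(|S+S|-|S-S|)$, equivalently that the pre-existing surplus $|S+S|-|S-S|$ is not overwhelmed by the fringe. Because both the new-sum count and the new-difference count are bounded by absolute constants (four new elements can create only boundedly many new sum- or difference-values \emph{beyond} what overlaps existing sets), while the surplus is at least $10|S|\ge 40$, the inequality $|S^{+}+S^{+}| - |S^{+}-S^{+}| \ge (|S+S|-|S-S|) - C > 0$ holds once $C\le 10|S|$, which is guaranteed. The delicate part is justifying that the new differences number at most a constant: one shows that each $a_i$ contributes its differences in a controlled band and that overlaps with $S-S$ only help us, so a generous over-count of at most $8$ truly-new difference-values (or any constant below $40$) closes the argument. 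I would therefore structure the proof as: (i) reduce to the upper case by symmetry; (ii) exhibit at least one new sum while bounding new differences by a constant less than $10|S|$; (iii) conclude $|S^{+}+S^{+}|>|S^{+}-S^{+}|$.
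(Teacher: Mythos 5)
There is a genuine gap at the heart of your step (ii): the claim that the four new elements create ``at most some absolute constant $C$'' of new difference-values is false, and the argument collapses without it. Adding a single element $a_1>\max S$ introduces the mixed differences $\pm(a_1-s)$ for every $s\in S$; nothing in the hypothesis restricts how far above $\max S$ the $a_i$ sit, so if $a_1>2\max S-\min S$, all $2|S|$ of these values exceed the diameter of $S$ in absolute value and are genuinely new. Over four additions the difference set can therefore grow by as much as $2|S|+2(|S|+1)+2(|S|+2)+2(|S|+3)=8|S|+12$ --- linear in $|S|$, not constant. This also means you have misread the role of the $10$-strong hypothesis: it is not a constant buffer of size $\ge 40$ absorbing a constant deficit, but a linear-in-$|S|$ surplus needed precisely to absorb a linear-in-$|S|$ loss.

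The paper's proof makes exactly this count, one element at a time: adding one element to a current set $T$ creates at least $0$ new sums and at most $2|T|$ new differences, so the surplus $|T+T|-|T-T|$ drops by at most $2|T|$ per step. Iterating four times from surplus $\ge 10|S|$ leaves surplus $\ge 10|S|-(8|S|+12)=2|S|-12>0$, where the final positivity requires $|S|>6$ --- which holds because every MSTD set has at least $8$ elements (the paper cites Hegarty for this). Note that this last ingredient is indispensable yet invisible from your framing: if the deficit really were a constant $\le 10$, no lower bound on $|S|$ would be needed, whereas the true bound $8|S|+12$ beats $10|S|$ only when $|S|>6$. Your symmetry reduction for the $b_i$ case and the observation that no sums are lost are both fine; what must be replaced is the constant-deficit claim, substituting the per-element $2|T|$ bound and the $|S|\ge 8$ fact.
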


\begin{proof}
We want to show that $S\cup\{a_1,a_2,a_3,a_4\}$ is MSTD. Adding one more element to a set $S$ produces at least $0$ new sums and at most $2|S|$ new differences. So, $|(S\cup\{a_1\})+(S\cup\{a_1\})|-|(S\cup\{a_1\})-(S\cup\{a_1\})|\ge 10|S|+0-2|S|=8|S|$. So, $S\cup\{a_1\}$ is MSTD. Define $S_1=S\cup\{a_1\}$ with $|S_1|=|S|+1$. Similarly, $|(S_1\cup\{a_2\})+(S_1\cup\{a_2\})|-|(S_1\cup\{a_2\})-(S_1\cup\{a_2\})|\ge 8|S|+0-2|S_1|=8|S|-2(|S|+1)=6|S|-2$. Again, $S_1\cup\{a_2\}$ is MSTD. Repeating this argument, we can show that $S_4=S\cup\{a_1,a_2,a_3,a_4\}$ is an MSTD set. The proof is similar for $S\cup\{b_1,b_2,b_3,b_4\}$. Note that in the end, we reach the requirement that $|S|>6$, which is certainly true by \cite[Theorem 1]{He}.
\end{proof}

\begin{rek} \label{MSTDinarithmeticprog}\normalfont
The following fact will be useful later: An arithmetic progression of integers (assumed long enough) can contain an arbitrarily large number of disjoint $10$-strong MSTD sets.
\end{rek}

With the above remark, we are ready to prove the following.

\begin{lem}
There exists $N\in\mathbb{N}$ such that for all $r\ge N$, $I_r$ can be partitioned into exactly three MSTD subsets.
\end{lem}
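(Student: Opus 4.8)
The plan is to realize the strategy outlined in the Overview (Section~\ref{overview}) with $k=1$: find a single $10$-strong MSTD set $S$ sitting inside $I_r$ so that the complement $I_r \setminus S$ is a single arithmetic progression, and then apply Theorem~\ref{2decomp} (via the linear-transformation remark) to split that complement into two MSTD subsets. Together with $S$ itself, this yields three MSTD subsets partitioning $I_r$. The cleanest way to arrange this is to put $S$ at the very top (or very bottom) of the interval: if $S$ occupies the largest few integers of $I_r$ and its complement is exactly $[1, r']$ for some $r'$, then the complement is already a (trivial) arithmetic progression—in fact an interval—and Theorem~\ref{2decomp} applies directly once $r'$ is large enough.

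The key steps, in order, are as follows. First, fix a $10$-strong MSTD set $\tilde S$ (for instance the base-expansion example $\{0,2,3,4,7,11,12,14\}$ yields one after squaring, as noted in the excerpt) and let $N_0$ be a size beyond which Theorem~\ref{2decomp} guarantees a $2$-decomposition of an interval. Second, I would embed a translated, possibly dilated copy of $S$ into the top of $I_r$; but to make the complement an honest interval, a better choice is to take $S$ to be a $10$-strong MSTD set whose convex hull is $[r - \max(S) + \min(S),\, r]$ and which, on removal, leaves the interval $[1, r-|{\rm hull}|]$ intact. Since Remark~\ref{MSTDinarithmeticprog} lets us find $10$-strong MSTD sets inside arbitrarily long arithmetic progressions, and since a suitable affine copy of a fixed $10$-strong MSTD set can be placed at the high end of $I_r$, we can ensure the complement is the interval $[1,r']$ with $r' = r - (\text{constant})$. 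Third, once $r \ge N$ for $N$ chosen so that $r' = r - (\text{constant}) \ge N_0$, apply Theorem~\ref{2decomp} to $[1,r']$ to obtain two MSTD subsets $B_1, B_2$; then $S, B_1, B_2$ partition $I_r$ into exactly three MSTD subsets.

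The main obstacle I anticipate is \emph{making the complement exactly an interval while keeping $S$ genuinely MSTD}. A $10$-strong MSTD set sitting at the top of $I_r$ is not itself an interval (it omits some integers below its maximum), so those omitted integers fall back into the ``complement,'' which then is an interval with a few holes rather than a clean interval. Two ways around this: (a) place $S$ so that its omitted internal gaps are absorbed—i.e., choose $S$ to be an \emph{interval-complement} friendly strong MSTD set, or (b) use the more flexible formulation from the Overview, treating $I_r \setminus S$ as a union of arithmetic progressions and invoking Theorem~\ref{2decomp} on each after a linear transformation. Approach~(b) is more robust: I would let $S$ be a fixed $10$-strong MSTD set placed at the fringe, observe that $I_r \setminus S$ decomposes into one long interval plus a bounded number of leftover points, fold those bounded leftovers into $S$ using Lemma~\ref{strongMSTDplus} (which lets us adjoin up to four extreme elements on each side while preserving the MSTD property), and then $2$-decompose the remaining clean interval via Theorem~\ref{2decomp}. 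The delicate bookkeeping is therefore ensuring the number of leftover fringe points is at most four on each side so that Lemma~\ref{strongMSTDplus} applies; this is where I expect the real (though routine) work to lie, and it fixes the threshold $N$ in terms of the fixed sizes of $S$ and the $2$-decomposition bound $N_0$.
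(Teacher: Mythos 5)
Your plan stands or falls on how you dispose of the internal holes of the strong MSTD set $S$, and neither of your proposed fixes can work. Fix (a) is impossible outright: if $I_r\setminus S$ were exactly an interval $[1,r']$, then $S=[r'+1,r]$ would itself be an interval, hence balanced ($|S+S|=|S-S|=2|S|-1$), never MSTD; so the complement always has holes, and they lie strictly between $\min S$ and $\max S$. Fix (b) then fails for two compounding reasons. First, Lemma~\ref{strongMSTDplus} only adjoins elements \emph{beyond the extremes} of $S$ ($a_1>\max S$ or $b_4<\min S$), whereas your leftovers are precisely the internal gaps of $S$. Second, even if one reran the lemma's counting for internally placed elements (the count itself does not use position), the budget is hopelessly small: each adjoined element can destroy up to $2|S|$ of the surplus $|S+S|-|S-S|\ge 10|S|$, so at most about four elements can ever be absorbed in total, while \emph{any} $10$-strong set necessarily has many holes. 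Indeed, if $S$ has hull of length $c$ and $h$ holes, then $|S+S|-|S-S|\le (2c+1)-(2|S|-1)=2h$, so $10$-strongness forces $h\ge 5|S|\ge 40$. The leftover points thus outnumber your absorption budget by an order of magnitude, so the bookkeeping you call ``routine'' cannot close, no matter where the copy of $S$ is placed.

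The missing idea, which the paper's proof supplies, is a component with \emph{unbounded} absorption capacity, and the paper gets it by inverting your architecture: the absorbing set is not the strong MSTD set but a scaffold $K^*\supseteq L\cup K\cup R$ built from a Martin--O'Bryant fringe pair together with all even middle elements, whose sumset is already the full $[2,2r]\setminus\{3\}$ and whose MSTD property rests on the missing extreme differences $\pm(r-7)$ --- which no inserted middle element can recreate. Hence $K^*$ swallows arbitrarily many leftover middle elements while staying MSTD. The two $10$-strong sets $S_1,S_2$ are then embedded \emph{inside} the leftover arithmetic progression of odd middle numbers (Remark~\ref{MSTDinarithmeticprog}), not at the fringe, and Lemma~\ref{strongMSTDplus} is invoked only for the handful of genuinely extreme leftover points $\{2,5,6,7\}$ and $\{r-6,r-4,r-3\}$, which really do lie below $\min S_1$ and above $\max S_2$ respectively. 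Note also that Theorem~\ref{2decomp} plays no role in the paper's three-set argument; your instinct to reuse it is reasonable for even $k$, but for $k=3$ you need the scaffold mechanism (or something equivalent) to handle the unavoidable $\ge 5|S|$ holes.
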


\begin{proof}
We use a pair of fringe elements described in \cite{MO}: $L=\{1,3,4,8,9,10,11\}$ and $R=\{r-10,r-9,r-8,r-7,r-5,r-2,r-1,r\}$. We see that $I_r\backslash (L\cup R)=\{2,5,6,7\}\cup[12,r-11]\cup \{r-3,r-4,r-6\}$. We have \begin{align*}
L+L&\ =\ [2,22]\setminus \{3\}\\
L+R&\ =\ [r-9,r+11]\\
R+R&\ =\ [2r-20,2r].
\end{align*}
Consider $K=\{\ell\,|\,12\le \ell\le r-11,\ell\mbox{ is even}\}\cup\{r-11\}$. We have $(\{11\}\cup K\cup\{r-10\})+(\{11\}\cup K\cup\{r-10\})=[22,2r-20]$. So, $(L\cup K\cup R)+(L\cup K\cup R)=[2,2r]\backslash\{3\}$. Because $\pm(R-L)$ lacks $\pm(r-7)$, $L\cup K\cup R$ is an MSTD set. It is not hard to see that adding numbers in $[12,r-11]\backslash K$ to $L\cup K\cup R$ still gives an MSTD set.

Now, $[12,r-11]\backslash K$ contains an arithmetic progression of consecutive odd integers. We can make this arithmetic progression arbitrarily large by increasing $r$. By Remark \ref{MSTDinarithmeticprog}, this arithmetic progression can contain two disjoint $10$-strong MSTD sets, called $S_1$ and $S_2$. We write $[12,r-11]\backslash K=S_1\cup S_2\cup M$. By Lemma \ref{strongMSTDplus}, $S_1^*=S_1\cup\{2,5,6,7\}$ and $S_2^*=S_2\cup\{r-6,r-4,r-3\}$ are both MSTD. By what we say above, $K^*=M\cup L\cup K\cup R$ is also MSTD. Because $S_1^*\cup S_2^*\cup K^*=I_r$, we have completed the proof.
\end{proof}

\begin{proof}[Proof of item (1) of Theorem \ref{kdecomp}]
Let $k\ge 2$ be chosen. Write $k=2m_1+3m_2$ for some $m_1$ and $m_2\in\mathbb{N}_0$. We can find $N\in\mathbb{N}$ such that $I_N=[1,N]=\big([1,k_1]\cup[k_1+1,k_2]\cup\cdots\cup[k_{m_1-1}+1,k_{m_1}]\big)
\cup\big([k_{m_1}+1,k_{m_1+1}]\cup[k_{m_1+1}+1,k_{m_1+2}]\cup\cdots\cup[k_{m_1+m_2-1}+1,N]\big)$, such that each of the first $m_1$ intervals are large enough to be partitioned into two MSTD sets while the next $m_2$ intervals are large enough to be partitioned into three MSTD sets. So, $I_N$ can be partitioned in exactly $k$ MSTD sets. This completes our proof.
\end{proof}


\subsection{Efficient Methods $(k\ge 4)$}

\subsubsection{Notations and Preliminary Results}

We introduce a notation to write a set; this notation was first used by Spohn \cite{Sp}. Given a set $S=\{a_1,a_2,\ldots,a_n\}$, we arrange its elements in increasing order and find the differences between two consecutive numbers to form a sequence. Suppose that $a_1<a_2<\cdots<a_n$; then our sequence is $a_2-a_1,a_3-a_2,a_4-a_3,\ldots,a_n-a_{n-1}$. Then we represent $$S\ = \ (a_1|a_2-a_1,a_3-a_2,a_4-a_3,\ldots,a_n-a_{n-1}).$$ Take $S=\{3,2,5,10,9\}$, for example. We arrange the elements in increasing order to have $2,3,5,9,10$ and form a sequence by looking at the difference between two consecutive numbers: $1,2,4,1$. So, we write $S=(2|1,2,4,1)$. All information about a set is preserved in this notation.

\begin{lem}\label{threesetsforpartition}
The following are MSTD sets for a given $m\in\mathbb{N}$: \begin{align*}&(1|1,1,2,1,\underbrace{4\ldots,4}_{m\text{-times}},3,1,1,2),\\ &(1|1,1,2,1,\underbrace{4\ldots,4}_{m\text{-times}},3,1,1,2,1),\\ &(1|1,1,2,1,\underbrace{4\ldots,4}_{m\text{-times}},3,1,1).\end{align*}
\end{lem}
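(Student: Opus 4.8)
The plan is to decode the three compact difference-string notations into explicit sets, then verify the MSTD property by direct computation of $A+A$ and $A-A$, exploiting the fact that all three sets share a common ``core'' so that most of the work can be done once. First I would translate each notation into a concrete set: the string $(1|1,1,2,1,\underbrace{4,\ldots,4}_{m},3,1,1,2)$ means we start at $1$ and accumulate the listed gaps, so reading left to right produces the small fringe $\{1,2,3,5,6\}$, then a long arithmetic-progression-like run of step-$4$ elements $6,10,14,\ldots,6+4m$, then the trailing fringe obtained from the gaps $3,1,1,2$, giving $6+4m, 9+4m, 10+4m, 11+4m, 13+4m$. The second and third sets are identical except for one extra trailing element (gap $1$ appended) or one fewer element (gap $2$ deleted) at the top end. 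I would name the common middle block $B=\{6,10,\ldots,6+4m\}$ and treat the low fringe $L$ and the high fringe $R$ (three slightly different versions) separately.

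Next I would compute the sumset and difference set by splitting into $L+L$, $L+B$, $B+B$, $B+R$, $R+R$ (and the analogous differences). The key structural observation is that $B$ is an arithmetic progression of step $4$, so $B+B$ and $B-B$ are arithmetic progressions of step $4$ with predictable lengths, while the fringes $L$ and $R$ are designed precisely so that their interaction with $B$ fills in the residues mod $4$ that $B+B$ misses on the sum side but leaves a gap on the difference side. Concretely, I expect to show that $A+A$ is a full or nearly-full interval of one parity while $A-A$ omits one or two specific values near the extremes of its range; the excess sums then outnumber the missing differences. Because the three sets differ only in $R$, I would do the bulk of this computation symbolically in terms of the gap $4m$ and then handle the three small top fringes as three short case checks, tracking only the sums and differences involving the topmost few elements.

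The main obstacle will be bookkeeping the boundary between the fringe contributions and the step-$4$ block: I must verify that $L+B$ and $B+R$ overlap $B+B$ with no gaps (so the sumset has no holes in the interior) and simultaneously pin down exactly which differences are realized versus missed in $A-A$, since the MSTD margin here is small and comes down to a precise count of a handful of extremal sums and differences. I would organize this by fixing the smallest useful value of $m$ (say $m=1$) to get the exact fringe arithmetic right, confirm the sum/difference counts there, and then argue that increasing $m$ by one lengthens both $A+A$ and $A-A$ by the same amount (each gains the same number of step-$4$ interior values), so the MSTD inequality $|A+A|>|A-A|$ is preserved uniformly in $m$. This inductive ``the margin is invariant under lengthening the core'' step is what makes the single verification suffice for all $m$, and it is the part I would write most carefully.
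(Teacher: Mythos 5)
Your decoding of the notation and your identification of the basic mechanism (a step-$4$ core whose sumset residues are completed by the fringes while the difference set is not) are correct, but the structural picture you build on it is wrong, and that error propagates into your inductive scheme. The difference set does \emph{not} miss ``one or two specific values near the extremes'': for the first set one has $S-S=[-(12+4m),12+4m]\setminus\{\pm(6+4\ell):0\le\ell\le m-1\}$, i.e.\ $2m$ missing values, growing linearly in $m$ and located in the \emph{middle} of the range (already for $m=1$ the set $\{1,2,3,5,6,10,13,14,15,17\}$ misses the differences $\pm 6$, and for $m=2$ it misses $\pm 6,\pm 10$). Likewise $A+A$ is not ``full or nearly-full'': it misses up to $2m-1$ values (for $m=2$ the sums $14$, $30$, $41$ are all absent). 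The MSTD property holds because $2m-1<2m$, a race between two linearly growing deficiency counts, not because a handful of extremal values are missed. The paper's proof establishes exactly these two bounds: at most $2m-1$ missing sums, by covering the lower half of the range using the small gaps and the upper half using the sub-progression $S_1=\{\ell\equiv 1\bmod 4\}$ translated by the top fringe elements; and at least $2m$ missing differences, by observing that every difference is the sum of a \emph{run of consecutive gaps} in the gap string $1,1,2,1,4,\ldots,4,3,1,1,2$, and that a run summing to $6+4\ell\equiv 2\bmod 4$ would have to start in $1,1,2,1$ or end in $3,1,1,2$, which a short case analysis rules out.

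Your plan to verify $m=1$ exactly and then argue that ``increasing $m$ by one lengthens both $A+A$ and $A-A$ by the same amount'' is numerically true ($|A+A|=26+6m$, $|A-A|=25+6m$, both incrementing by $6$), but as stated it is an assertion, not a proof, and it is precisely the hard part: when $m\mapsto m+1$ both ranges grow by $8$ while the missing sets each grow by $2$, and certifying this requires an exact (or at least matched upper/lower) characterization of \emph{which} sums and differences are missing for every $m$ --- the same global counting you hoped to avoid. Moreover, because you expect the missing differences to sit near the extremes, you would be looking in the wrong place when carrying out that characterization. To repair the argument, replace the induction by the paper's direct two-sided count: prove $|S+S|\ge(25+8m)-(2m-1)$ and $|S-S|\le(25+8m)-2m$ for all $m$ simultaneously, using the run-of-gaps congruence argument for the differences; the three variant top fringes then genuinely reduce to short case checks, as you anticipated.
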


We present the proof in Appendix \ref{Apen34}.
\subsubsection{Efficient Methods ($k\ge 4$)}
In our decomposition of $I_r$ into $k$ MSTD subsets for $k\ge 3$, we use the base expansion method. However, the base expansion method is inefficient in terms of cardinalities of our sets. Is there a more efficient way to decompose? In answering this question, we present a method of decomposing $I_r$ into $k$ MSTD subsets ($k\ge 4$) that helps reduce the cardinalities of sets. We use the infinite family of MSTD sets in Lemma \ref{threesetsforpartition} to achieve this.

We want to decompose $I_r$ for sufficiently large $r$ into $k$ ($k\ge 4$) MSTD subsets. If $k$ is even, we can simply write $I_r$ as the union of $k/2$ arithmetic progressions, each of which, by Theorem \ref{2decomp}, can be decomposed into two MSTD subsets in an efficient way. If $k\ge 5$ is odd, then we consider $r\Mod 4$. If $r\equiv 1\Mod 4$, write $r=4m+13$ for some $m\in\mathbb{N}$ and consider $(1|1,1,2,1,\underbrace{4\ldots,4}_{m\text{-times}},3,1,1,2)$. We have \begin{equation*}\begin{split}&I_r\backslash\{1,2,3,5,6,10,14,18,22,26,\ldots,6+4m,9+4m,10+4m,11+4m,13+4m\}\\
&=\{4,8,12,16,20,\ldots,8+4m,12+4m\}\cup\{7,9,11,13,\ldots,7+4m\}.\end{split}\end{equation*} Notice that both $\{4,8,12,16,20,\ldots,8+4m,12+4m\}$ and $\{7,9,11,13,\ldots,7+4m\}$ are arithmetic progressions and each of these sets can be decomposed into an even number of MSTD sets. So, our original sets $I_r=[1,13+4m]$ can be decomposed into exactly $k$ MSTD sets. If $r\equiv 2\Mod 4$, write $r=4m+14$ and consider $(1|1,1,2,1,\underbrace{4\ldots,4}_{m\text{-times}}$, $3,1,1,2,1)$. If $r\equiv 3\Mod 4$, write $r=4m+11$ and consider $(1|1,1,2,1,\underbrace{4\ldots,4}_{m\text{-times}},3,1,1)$. If $r\equiv 0\Mod 4$, write $r=4m+12$ and consider $(2|1,1,2,1,\underbrace{4\ldots,4}_{m\text{-times}},3,1,1)$. Using the same argument as above, we can show that $I_r$ can be decomposed into exactly $k$ MSTD sets. We prove the upper and lower bounds for $R(k)$ in Appendix \ref{ApenR}.


\section{Future Work}
We end with several additional questions to pursue.

\begin{enumerate}
    \item In \cite{AMMS}, the authors show that there is a positive constant lower bound for the percentage of decompositions into two MSTD sets. Is there a positive constant lower bound for the percentage of decompositions into $k$ MSTD sets for $k\ge 3?$ In other words, is Conjecture \ref{conjallk} true? A method is to find a family of sets $(A_i)_{i=1}^{k}$ that satisfies the condition in Theorem \ref{sufcon}.
    \item Is there a method of $k$-decomposition that is of high density, for example $\Theta (1/r^c)$ for small $c?$
    \item For the $3$-decomposition, we use the base expansion method, which is inefficient. Can we find an efficient way to decompose $I_r$ into three MSTD subsets.
    \item Can we find some better bounds for $R(k)$ in Theorem \ref{kdecomp}? There is a yawning gap between our upper and lower bounds.
    \item Suppose that $I_r$ can be decomposed into $k$ MSTD subsets. Can we conclude that $I_{r+1}$ can be decomposed into $k$ MSTD subsets?

\end{enumerate}

\appendix

\section{Lower and Upper Bounds for $R(k)$ in Theorem \ref{kdecomp}}\label{ApenR}
Given $k\ge 2$, the lower bound is obvious since by \cite{He}, the smallest cardinality of an MSTD set is 8.
In \cite{AMMS}, it is shown that for all $r\ge 20$, $I_r$ can be partitioned into two MSTD subsets.
To decompose $I_r$ into $k\ge 2$ (even) MSTD subsets, we write $I_r$ to be the union of $k/2$ arithmetic progressions and require each to be of length at least 20. So, for all $r\ge 10k$, $I_r$ can be decomposed into $k$ MSTD subsets. Hence, $R(k)\le 10k$.

In our method to decompose $I_r$ into $k\ge 5$ (odd) MSTD subsets, we use particular MSTD sets, which are
\begin{align*}
A_1\ &=\ (1|1,1,2,1,\underbrace{4\ldots,4}_{m\text{-times}},3,1,1,2),\\
A_2\ &=\ (1|1,1,2,1,\underbrace{4\ldots,4}_{m\text{-times}},3,1,1,2,1),\\
A_3\ &=\ (1|1,1,2,1,\underbrace{4\ldots,4}_{m\text{-times}},3,1,1),\\
A_4\ &=\ (2|1,1,2,1,\underbrace{4\ldots,4}_{m\text{-times}},3,1,1).
\end{align*}
These sets have the property that $I_{\max A_i}\backslash A_i,i\in [1,4]$ is the union of two arithmetic progressions. Given $m$, $I_{\max A_3}\backslash A_3$ gives a pair of arithmetic progressions of shortest length, $m+2$ and $2m+1$, while $\max A_2=4m+14=\max\{\max A_i|i\in[1,4]\}$. We consider two cases.
\begin{enumerate}
    \item $k=4j+1\mbox{ } (j\ge 1)$. We require that all arithmetic progressions of length at least $m+2$ can be partitioned into $2j$ MSTD sets. Then $m+2\ge 20j$ and so, $m\ge 20j-2$, which also guarantees that all arithmetic progressions of length at least $2m+1$ can be partitioned into $2j$ MSTD sets. So, we find out that for $r\ge 4(20j-2)+14=20k-14$, $I_r$ can be partitioned into $k$ MSTD subsets. Hence, $R(k)\le 20k-14$.
    \item $k=4j+3\mbox{ } (j\ge 1)$. We require that all arithmetic progressions of length at least $m+2$ can be partitioned into $2j$ MSTD sets. Then $m+2\ge 20j$ and so, $m\ge 20j-2$, which also guarantees that all arithmetic progressions of length at least $2m+1$ can be partitioned into $2j+2$ MSTD sets. So, we find out that for $r\ge 4(20j-2)+14=20k-54$, $I_r$ can be partitioned into $k$ MSTD subsets. Hence, $R(k)\le 20k-54$.
\end{enumerate}
Finally, for $3$-decomposition, we use the base expansion method, where we require a run of consecutive odd numbers (an arithmetic progression) to contain two disjoint $10$-strong MSTD sets. The length of the arithmetic progression is at least $\frac{r-26}{2}+1$. Let $T$ be $\min\{\max A: A \mbox{ is }10-\mbox{strong }\}$. Then we require $\frac{r-26}{2}+1\ge 2T$ or $r\ge 4T+24$. Hence, $24\le R(k)\le 4T+24$.



\section{Sufficient Condition for a Positive Constant Bound}\label{Apensuf}

\begin{lem}\label{MOmodi}
Consider $S\subseteq \{0,1,\ldots,r-1\}$ and $S=L\cup M\cup R$. Fix $L\subseteq [0,\ell-1]$ and $R\subseteq [r-\ell,r-1]$ for some fixed $\ell$. Let $M$ be a uniformly randomly chosen subset of $[\ell, r-\ell-1]$. Then for any $\varepsilon>0$, there exists sufficiently large $r$ such that
\begin{equation}
\mathbb{P}([2\ell-1,2r-2\ell-1]\subseteq S+S)\ \ge \ 1-6(2^{-|L|}+2^{-|R|})-\varepsilon.
\end{equation}
\end{lem}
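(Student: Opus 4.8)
The plan is to prove that the sumset $S+S$ covers the long middle interval $[2\ell-1, 2r-2\ell-1]$ with high probability by showing that the complementary event (some integer in this range is \emph{missed}) has small probability, and then applying a union bound over all the integers in the range. The key observation is that for a target value $t$ in the middle range, there are many potential representations $t = x + y$ with $x, y \in S$, and because $M$ is chosen uniformly at random, each element of $[\ell, r-\ell-1]$ lands in $M$ independently with probability $1/2$. So the probability that \emph{no} representation $t = x+y$ is realized decays geometrically in the number of available disjoint representing pairs.

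First I would partition the target range $[2\ell-1, 2r-2\ell-1]$ according to which parts of $S = L \cup M \cup R$ can contribute the summands. For a value $t$ that is small (near $2\ell-1$), representations will come predominantly from pairs $(x,y)$ with one summand in the fixed fringe $L$ and the other in the random middle $M$; symmetrically, for $t$ near the top of the range, representations come from $R + M$ pairs; and for central values of $t$, representations come from $M + M$ pairs. The factor $6(2^{-|L|} + 2^{-|R|})$ in the bound suggests that the dominant contributions to the failure probability come from the fringe-assisted regions, where the number of guaranteed independent trials is controlled by $|L|$ or $|R|$ rather than by the (much larger) size of $M$. For each fixed summand $a \in L$, the event that $t - a \in S$ can be arranged to depend on an independent coordinate of $M$, giving roughly $2^{-|L|}$ as the chance that all such attempts fail. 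I would track these regions carefully, counting a set of \emph{disjoint} representing pairs for each $t$ so that the corresponding indicator events are genuinely independent, and then bound the miss-probability for each $t$ by $2^{-(\text{number of disjoint pairs})}$.

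The main technical step, and the step I expect to be the main obstacle, is the bookkeeping that produces the clean constant $6$ and the split into $2^{-|L|} + 2^{-|R|}$. One must show that \emph{every} $t$ in the range admits either at least $|L|$ disjoint pairs of the form (fixed element of $L$, random element of $M$), or at least $|R|$ such pairs from the right, or enough $M+M$ pairs that its contribution is dominated by the fringe terms; and then sum the per-$t$ failure probabilities. Summing $2^{-|L|}$ over an interval of length $\Theta(r)$ would diverge, so the real content is that only $O(1)$ many values of $t$ (up to the factor $6$) actually rely on the scarce fringe-pair counts, while the overwhelming majority of central $t$ enjoy a number of disjoint $M+M$ pairs growing with $r$, contributing only the vanishing $\varepsilon$ term once $r$ is large. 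I would organize this as: (i) identify the central block of $t$ for which the count of disjoint $M+M$ pairs exceeds $\log_2(1/\varepsilon) + \log_2(r)$, so that the union bound over that block contributes at most $\varepsilon$; (ii) handle the two fringe blocks near the endpoints, where the number of relevant $t$ is bounded by a constant multiple of the fringe sizes and each contributes $2^{-|L|}$ or $2^{-|R|}$; and (iii) collect constants to reach the stated bound.

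Finally I would assemble these pieces with a single union bound, writing $\mathbb{P}(\exists\, t \in [2\ell-1, 2r-2\ell-1] : t \notin S+S) \le \sum_t \mathbb{P}(t \notin S+S)$ and splitting the sum over the three blocks just described. The central block contributes at most $\varepsilon$ for $r$ sufficiently large, and the two fringe blocks contribute at most $6 \cdot 2^{-|L|}$ and $6 \cdot 2^{-|R|}$ respectively after the constant is tracked; taking the complement yields the claimed inequality. This mirrors the strategy of Martin and O'Bryant (as modified by Zhao), with the twist that $L$ and $R$ here are held \emph{fixed} rather than random, which is exactly what allows the bound to be stated in terms of $2^{-|L|}$ and $2^{-|R|}$ and makes the result usable as a building block for the $k$-decomposition proportion result.
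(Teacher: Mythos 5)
Your overall strategy is viable --- it is essentially how Martin and O'Bryant prove the ingredients in question --- but it is not what the paper does, and as written it does not establish the stated inequality. The paper's proof is a short reduction: it splits the target interval into the fringe-assisted part $[2\ell-1,r-\ell-1]\cup[r+\ell-1,2r-2\ell-1]$ and the central window $[r-\ell,r+\ell-2]$, quotes Proposition 8 of \cite{MO} to bound the failure probability on the first part by exactly $6(2^{-|L|}+2^{-|R|})$, and handles the central window by noting $M+M\subseteq S+S$, shifting $M$ to a subset of $[0,N-1]$ with $N=r-2\ell$, and invoking Lemma 7 of \cite{MO} to make the summed miss-probabilities less than $\varepsilon$ for $r$ large. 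In particular the constant $6$ is inherited from \cite{MO}, not re-derived. You instead propose to re-prove both ingredients from first principles, which is legitimate in outline but leaves unproven exactly the part that carries the constant: you yourself defer ``the bookkeeping that produces the clean constant $6$'' as the expected main obstacle, and since $6$ appears in the statement (and feeds into $f(L,R)$ in Theorem \ref{sufcon}), a proof that does not produce it proves a weaker lemma.

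Two concrete points in your accounting would fail. First, the per-$t$ bound $2^{-(\text{number of disjoint pairs})}$ is wrong for $M+M$ pairs: for a disjoint pair $\{x,t-x\}$ with both coordinates random, the failure probability is $3/4$, not $1/2$; the factor $1/2$ per trial is correct only for fringe-assisted pairs $(a,t-a)$ with $a\in L$ (or $a\in R$) fixed and $t-a$ in the random middle. Second, your description of the fringe blocks misidentifies where summability comes from. It is not true that only $O(1)$ many values of $t$ rely on fringe pairs, nor does ``number of relevant $t$ bounded by a constant multiple of the fringe sizes, each contributing $2^{-|L|}$'' help: every $t\in[2\ell-1,r-\ell-1]$ has exactly the $|L|$ independent fringe trials $(a,t-a)$, $a\in L$, and a count-times-$2^{-|L|}$ bound gives $C\,|L|\,2^{-|L|}$ or worse, which is not $\le 6\cdot 2^{-|L|}$ once $|L|$ is large. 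What actually tames the sum over these $\Theta(r)$ values is the geometric gain from the $M+M$ pairs that accumulate as $t$ moves inward, i.e.\ bounds of the shape $\mathbb{P}(t\notin S+S)\le 2^{-|L|}(3/4)^{\lfloor (t-2\ell+1)/2\rfloor}$, whose sum is a constant times $2^{-|L|}$ --- but the crude constant this yields is about $8$, and recovering $6$ requires the finer argument inside Proposition 8 of \cite{MO}. The cleanest repair is the paper's own move: cite that proposition for the fringe ranges, and reserve your union bound with many disjoint pairs (with the $3/4$ corrected) for the central window, where it only needs to produce the $\varepsilon$ term.
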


\begin{proof}
We write
\begin{equation}\begin{split}&\mathbb{P}([2\ell-1,2r-2\ell-1]\subseteq S+S)\ = \ 1-\mathbb{P}([2\ell-1,2r-2\ell-1]\not\subseteq S+S)\\
                                                   &= \ 1-\mathbb{P}([2\ell-1,r-\ell-1]\cup [r+\ell-1,2r-2\ell-1]\not\subseteq S+S\\&\indent \mbox{ or } [r-\ell,r+\ell-2]\not\subseteq S+S)\\
                                                   &\ge \ 1-\mathbb{P}([2\ell-1,r-\ell-1]\cup [r+\ell-1,2r-2\ell-1]\not\subseteq S+S\\
                                                   &\indent -\mathbb{P}([r-\ell,r+\ell-2]\not\subseteq S+S).
\end{split}
\end{equation}

By Proposition 8 in \cite{MO}, \begin{equation}\mathbb{P}([2\ell-1,r-\ell-1]\cup [r+\ell-1,2r-2\ell-1]\not\subseteq S+S)\ \le \ 6(2^{-|L|}+2^{-|R|}).\end{equation} We find a upper bound for
\begin{equation}\begin{split}\mathbb{P}([r-\ell,r+\ell-2]\not\subseteq S+S)\ \le \ \mathbb{P}([r-\ell,r+\ell-2]\not\subseteq M+M),\end{split}\end{equation} because $[r-2\ell,r-2]\not\subseteq S+S$ implies $[r-\ell,r+\ell-2]\not\subseteq M+M$. By a linear shift of $\ell$, we can consider $M$ a subset of $[0,r-2\ell-1]$ and $\mathbb{P}([r-\ell,r+\ell-2]\not\subseteq M+M)$ turns into $\mathbb{P}([r-2\ell,r-2]\not\subseteq M+M)$. Use the change of variable $N=r-2\ell$. We have: $M\subseteq [0,N-1]$ and we estimate:
$\mathbb{P}([r-2\ell,r-2]\not\subseteq M+M)=\mathbb{P}([N,N+2\ell-2]\not\subseteq M+M)\le \sum_{k=N}^{N+2\ell-2}\mathbb{P}(k\notin M+M)$. Lemma 7 in \cite{MO} shows that the last quantity tend to $0$ as $N$ goes to infinity. So, for any $\varepsilon>0$, there exists sufficiently large $r$ such that $\mathbb{P}([r-\ell,r+\ell-2]\not\subseteq S+S)<\varepsilon$. This completes our proof.
\end{proof}

\begin{lem}\label{taudefi}
Consider $S\subseteq \{0,1,\ldots,r-1\}$ and $S=L\cup M\cup R$. Fix $L\subseteq [0,\ell-1]$ and $R\subseteq [r-\ell,r-1]$ for some fixed $\ell$. Let $M$ be a uniformly randomly chosen subset of $[\ell,r-\ell-1]$. Let $a$ denote the smallest integer such that both $[\ell,2\ell-a]\subseteq L+L$ and $[2r-2\ell+a-2,2r-\ell-2]\subseteq R+R$. Then, for all $\varepsilon>0$, there exists sufficiently large $r$ such that
\begin{equation}
\begin{split}
\mathbb{P}&([2\ell-a+1,2r-2\ell+a-3]\subseteq S+S)\\ &\ge \ 1-(a-2)(2^{-\tau(R)}+2^{-\tau(L)})-6(2^{-|L|}+2^{-|R|})-\varepsilon,
\end{split}
\end{equation}
where $\tau(L)=|\{i\in L|i\le \ell-a+1\}|$ and $\tau(R)=|\{i\in R|i\ge r-\ell+a-2\}|$.
\end{lem}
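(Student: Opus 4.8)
The plan is to treat this as a refinement of Lemma \ref{MOmodi}. The target interval $[2\ell-a+1,\,2r-2\ell+a-3]$ decomposes as the ``middle'' interval $[2\ell-1,\,2r-2\ell-1]$ of that lemma together with two short tails appended on each side: a left tail $[2\ell-a+1,\,2\ell-2]$ and a right tail $[2r-2\ell,\,2r-2\ell+a-3]$, each of length $a-2$. These three pieces tile the target interval exactly. I would bound the failure probability $\mathbb{P}([2\ell-a+1,\,2r-2\ell+a-3]\not\subseteq S+S)$ by a union bound over the three pieces: the middle piece is controlled directly by Lemma \ref{MOmodi}, contributing $6(2^{-|L|}+2^{-|R|})+\varepsilon$, while each tail will contribute one of the $2^{-\tau(\cdot)}$ terms.

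For the left tail, fix a target sum $s\in[2\ell-a+1,\,2\ell-2]$. The key observation is that every $i\in L$ with $i\le \ell-a+1$ yields a representation $s=i+(s-i)$ whose second summand lies in the index range $[\ell,\,r-\ell-1]$ of $M$: indeed $s-i\ge(2\ell-a+1)-(\ell-a+1)=\ell$, and $s-i\le 2\ell-2\le r-\ell-1$ once $r$ is large. There are exactly $\tau(L)=|\{i\in L: i\le \ell-a+1\}|$ such indices, and for distinct $i$ the positions $s-i$ are distinct, so the events $\{s-i\in M\}$ are independent, each of probability $1/2$. Restricting to these $L+M$ representations gives $\mathbb{P}(s\notin S+S)\le 2^{-\tau(L)}$, and a union bound over the $a-2$ choices of $s$ yields $\mathbb{P}([2\ell-a+1,\,2\ell-2]\not\subseteq S+S)\le (a-2)\,2^{-\tau(L)}$.

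The right tail is handled symmetrically: for $s\in[2r-2\ell,\,2r-2\ell+a-3]$, every $i\in R$ with $i\ge r-\ell+a-2$ gives $s-i\in[\ell,\,r-\ell-1]$ once $r$ is large, there are $\tau(R)$ such indices, and the same independence argument yields $\mathbb{P}([2r-2\ell,\,2r-2\ell+a-3]\not\subseteq S+S)\le (a-2)\,2^{-\tau(R)}$. Adding the three bounds and passing to complements produces the claimed inequality. This is essentially a routine extension of Lemma \ref{MOmodi}, so I do not expect a serious obstacle; the only real care is in the index bookkeeping, namely verifying that each shifted position $s-i$ genuinely lands in $M$'s range $[\ell,\,r-\ell-1]$ (which is what forces the ``sufficiently large $r$'' hypothesis) and that distinct fringe indices give distinct, hence independent, positions. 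The parameter $a$ enters precisely here: its definition guarantees that the sums below $2\ell-a+1$ and above $2r-2\ell+a-3$ are already supplied deterministically by $L+L$ and $R+R$, so that for the two extreme tail sums there still remain $\tau(L)$ and $\tau(R)$ usable fringe elements, respectively.
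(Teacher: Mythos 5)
Your proposal is correct and follows essentially the same route as the paper's proof: the identical three-piece tiling of $[2\ell-a+1,\,2r-2\ell+a-3]$ with a union bound, Lemma~\ref{MOmodi} for the middle interval, and the per-sum bound $\mathbb{P}(s\notin S+S)\le 2^{-\tau(L)}$ (resp.\ $2^{-\tau(R)}$) for the tails via the $\tau(L)$ (resp.\ $\tau(R)$) independent events $\{s-i\in M\}$. The only difference is cosmetic: you spell out the index bookkeeping (that $s-i\ge\ell$, that $s-i\le r-\ell-1$ for large $r$, and that distinct fringe elements give distinct, hence independent, positions), which the paper checks only for the extreme sum $k=2\ell-a+1$ and otherwise leaves implicit.
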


\begin{proof}
We have:
\begin{align*}
&\mathbb{P}([2\ell-a+1,2r-2\ell+a-3]\subseteq S+S)\\
&= \ \mathbb{P}([2\ell-1,2r-2\ell-1]\subseteq S+S\mbox{ and } [2\ell-a+1,2\ell-2]\subseteq S+S\\&\qquad\qquad\mbox{ and }[2r-2\ell,2r-2\ell+a-3]\subseteq S+S)\\
&= \ 1- \mathbb{P}([2\ell-1,2r-2\ell-1]\not\subseteq S+S\mbox{ or } [2\ell-a+1,2\ell-2]\not\subseteq S+S\\&\qquad\qquad\mbox{ or }[2r-2\ell,2r-2\ell+a-3]\not\subseteq S+S)\\
&\ge \ 1-\mathbb{P}([2\ell-a+1,2\ell-2]\not\subseteq S+S)-\mathbb{P}([2r-2\ell,2r-2\ell+a-3]\not\subseteq S+S)\\&\quad -\mathbb{P}([2\ell-1,2r-2\ell-1]\not\subseteq S+S).
\end{align*}
By Lemma \ref{MOmodi}, $\mathbb{P}([2\ell-1,2r-2\ell-1]\not\subseteq S+S)\le 6(2^{-|L|}+2^{-|R|})+\varepsilon$. We have
\begin{align}
\mathbb{P}([2\ell-a+1,2\ell-2]\not\subseteq S+S)\ \le \ \sum_{k=2\ell-a+1}^{2\ell-2}\mathbb{P}(k\notin S+S).
\end{align}
Let $\tau(L)=|\{i\in L|i\le \ell-a+1\}|$ and $\tau(R)=|\{i\in R|i\ge r-\ell+a-2\}|$. For each value of $k$ in $[2\ell-a+1,2\ell-2]$, in order that $k\notin S+S$, all pairs of numbers that sum up to $k$ must not be both in $S$. Take $k=2\ell-a+1$, for example. For a number $x\le \ell-a+1$, the number $y$ that when added to $x$ gives $2\ell-a+1$ is at least $\ell$ and $y\notin S$. So, $\mathbb{P}(k\notin S+S)\le 2^{-\tau(L)}$ and hence,
\begin{align}
\mathbb{P}([2\ell-a+1,2\ell-2]\not\subseteq S+S)\ \le \ (a-2)2^{-\tau(L)}.
\end{align}
Similarly,
\begin{align}
\mathbb{P}([2r-2\ell,2r-2\ell+a-3]\not\subseteq S+S)\ \le \ (a-2)2^{-\tau(R)}.
\end{align}
We have shown that
\begin{multline*}
	\mathbb{P}([2\ell-a+1,2r-2\ell+a-3]\subseteq S+S)\\
	\ge \ 1-(a-2)(2^{-\tau(R)}+2^{-\tau(L)})-6(2^{-|L|}+2^{-|R|}))-\varepsilon.
\end{multline*}
This completes our proof.
\end{proof}

\begin{cor}\label{coradjust}
Consider set $A=L\cup R$, where $L\subseteq [0,n-1]$ and $R\subseteq [n,2n-1]$. Let $m\in\mathbb{N}$ be chosen. Set $R'=R+m\subseteq [n+m,2n+m-1]$. Let $M\subseteq [n,n+m-1]$ be chosen uniformly at random. Form $S = L\cup M\cup R'$. Let $a$ be the smallest integer such that $[n,2n-a]\subseteq L+L$ and $[2n+a-2,3n-2]\subseteq R+R$. Then for all $\varepsilon>0$, there exists sufficiently large $m$ such that
\begin{equation}
\begin{split}
\mathbb{P}&([2n-a+1,2n+2m+a-3]\not\subseteq S+S)\\&\le \ (a-2)(2^{-\tau(R)}+2^{-\tau(L)})+6(2^{-|L|}+2^{-|R|})+\varepsilon,
\end{split}
\end{equation}
where $\tau(L)=|\{i\in L|i\le n-a+1\}|$ and $\tau(R)=|\{i\in R|i\ge n+a-2\}|$.
\end{cor}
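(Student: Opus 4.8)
The plan is to deduce Corollary \ref{coradjust} directly from Lemma \ref{taudefi} by a careful change of parameters, followed by passing to the complementary event. The essential observation is that the random set $S=L\cup M\cup R'$ studied in the corollary is exactly the random set of Lemma \ref{taudefi}, once we identify the ambient interval length and the fringe positions correctly.

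First I would set up the dictionary between the two statements. Take the ambient interval $\{0,1,\ldots,r-1\}$ of Lemma \ref{taudefi} to be $\{0,1,\ldots,2n+m-1\}$, i.e.\ put $r=2n+m$, and take the fringe width $\ell=n$. Then $L\subseteq[0,n-1]=[0,\ell-1]$ as required; the middle block $[\ell,r-\ell-1]$ becomes $[n,n+m-1]$, which is exactly where $M$ lives; and the right fringe $[r-\ell,r-1]$ becomes $[n+m,2n+m-1]$, which is exactly the range of $R'=R+m$. Thus $R'$ plays the role of the set ``$R$'' in Lemma \ref{taudefi}; note it is deterministic, so the hypotheses (fixed $L$, fixed right fringe, uniformly random middle $M$) are all met.

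Next I would verify that every quantity appearing in Lemma \ref{taudefi} specializes to the corresponding quantity in the corollary. For the constant $a$: the left condition $[\ell,2\ell-a]\subseteq L+L$ becomes $[n,2n-a]\subseteq L+L$; and since $R'+R'=(R+R)+2m$, the right condition $[2r-2\ell+a-2,2r-\ell-2]\subseteq R'+R'$ becomes $[2n+2m+a-2,3n+2m-2]\subseteq(R+R)+2m$, i.e.\ $[2n+a-2,3n-2]\subseteq R+R$ after subtracting $2m$. These are precisely the two defining conditions for $a$ in the corollary, so $a$ is the same integer. The target sum-interval $[2\ell-a+1,2r-2\ell+a-3]$ becomes $[2n-a+1,2n+2m+a-3]$. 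For the weights, $\tau(L)=|\{i\in L:i\le\ell-a+1\}|$ becomes $|\{i\in L:i\le n-a+1\}|$, matching the corollary; and $\tau(R')=|\{i\in R':i\ge r-\ell+a-2\}|=|\{i\in R':i\ge n+m+a-2\}|$, which under $i\mapsto i-m$ equals $|\{j\in R:j\ge n+a-2\}|=\tau(R)$. Finally $|R'|=|R|$, so the fringe terms $2^{-|L|}$ and $2^{-|R|}$ in the bound agree as well.

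With the dictionary verified, I would simply invoke Lemma \ref{taudefi}: for every $\varepsilon>0$ there is a sufficiently large $r=2n+m$ (equivalently, since $n$ is fixed, a sufficiently large $m$) with
\[
\mathbb{P}([2n-a+1,2n+2m+a-3]\subseteq S+S)\ \ge\ 1-(a-2)(2^{-\tau(R)}+2^{-\tau(L)})-6(2^{-|L|}+2^{-|R|})-\varepsilon.
\]
Taking the complement of this event and subtracting from $1$ yields exactly the claimed upper bound on $\mathbb{P}([2n-a+1,2n+2m+a-3]\not\subseteq S+S)$. The only real work is the index bookkeeping of the previous paragraph; the step requiring the most care is the translation of the right-fringe conditions and of $\tau(R)$ through the shift $R'=R+m$, where one must consistently add or subtract $2m$ (for sumsets) or $m$ (for individual elements). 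Beyond this routine tracking of indices, there is no genuine obstacle.
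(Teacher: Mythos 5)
Your proof is correct and takes essentially the same approach as the paper: the paper's own (one-line) proof likewise obtains the corollary by substituting $r=2n+m$ and $\ell=n$ into Lemma \ref{taudefi} and observing that $R'$ is a linear shift of $R$. Your verification that $a$, $\tau(L)$, $\tau(R)$, and the target interval $[2n-a+1,2n+2m+a-3]$ all specialize correctly under the shift just spells out the bookkeeping the paper leaves implicit.
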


\begin{proof}
The corollary follows immediately by setting $r=2n+m$ and $\ell=n$ in Lemma \ref{taudefi}. Also, notice that $R'$ is a linear shift of $R$.
\end{proof}

For conciseness, we denote $$f(L,R)\ =\ (a-2)(2^{-\tau(R)}+2^{-\tau(L)})+6(2^{-|L|}+2^{-|R|}).$$

\begin{thm}\label{sufcon}
Suppose that there exist sets $(A_i)_{i=1}^{k}$, which are pairwise disjoint, MSTD, $P_n$ and $\cup_{i=1}^kA_i=[0,2n-1]$. In particular, each $A_i=L_i\cup R_i$, where $L_i\subseteq [0,n-1]$ and $R_i\subseteq [n,2n-1]$. Let $m\in\mathbb{N}_0$ be chosen. Form $R_i'=R_i+m$ and $(M_i)_{i=1}^{k}\subseteq [n,n+m-1]$ such that $(M_i)_{i=1}^k$ are pairwise disjoint and $\cup_{i=1}^k M_i=[n,n+m-1]$. Then the proportion of cases where all $S_i=L_i\cup M_i\cup R'_i$ are MSTD is bounded below by a positive constant if
\begin{equation}
1-\sum_{i=1}^{k}f(L_i,R_i)\ >\ 0,
\end{equation} for $m$ sufficiently large. In other words, there exists a positive constant lower bound for the proportion of $k$-decompositions into MSTD subsets as $m\rightarrow\infty$.
\end{thm}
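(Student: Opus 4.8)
The plan is to control the proportion of bad $k$-decompositions by a union bound over the individual sets, with Corollary \ref{coradjust} supplying the per-set estimate. Regard a uniformly chosen $k$-decomposition of $[n,n+m-1]$ as the result of assigning each middle element, independently, to one of the $k$ parts $M_1,\dots,M_k$, so that the ``proportion of cases'' is a probability under this model. Writing the event of interest as the complement of ``some $S_i$ fails to be MSTD,'' the union bound gives
\[
\mathbb{P}(\text{all } S_i \text{ are MSTD})\ \ge\ 1-\sum_{i=1}^{k}\mathbb{P}(S_i \text{ is not MSTD}),
\]
so it suffices to bound each summand by essentially $f(L_i,R_i)$.

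For a fixed $i$, I would first reduce the MSTD property of $S_i$ to a statement about its sum set. Since $A_i$ is MSTD and $P_n$, passing from $A_i$ to $S_i=L_i\cup M_i\cup R_i'$ by shifting $R_i$ up by $m$ and inserting $M_i$ creates at most $2m$ new differences --- the $P_n$ hypothesis guarantees the central differences of $A_i$ are already present, exactly as in the proof of Lemma \ref{P_nsum} --- while it creates exactly $2m$ new sums as soon as the central block of $S_i+S_i$ is complete, i.e.\ as soon as $S_i$ is $SP_n$. Concretely, if the interval $[2n-a_i+1,\,2n+2m+a_i-3]$ from Corollary \ref{coradjust} (with $a_i$ the threshold associated to $(L_i,R_i)$) is contained in $S_i+S_i$, then $L_i+L_i$ and $R_i'+R_i'$ already supply the remaining sums, so the increase in sums is $2m$ and the increase in differences is at most $2m$; combined with $|A_i+A_i|>|A_i-A_i|$ this forces $|S_i+S_i|>|S_i-S_i|$. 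Hence
\[
\mathbb{P}(S_i \text{ is not MSTD})\ \le\ \mathbb{P}\big([2n-a_i+1,\,2n+2m+a_i-3]\not\subseteq S_i+S_i\big),
\]
and Corollary \ref{coradjust} bounds the right-hand side by $f(L_i,R_i)+\varepsilon$ once $m$ is large.

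Summing over $i$ then yields $\mathbb{P}(\text{all } S_i \text{ MSTD})\ge 1-\sum_{i=1}^k f(L_i,R_i)-k\varepsilon$. Under the hypothesis $1-\sum_i f(L_i,R_i)>0$, I would set $\varepsilon=(1-\sum_i f(L_i,R_i))/(2k)$ and choose $m$ large enough that all $k$ instances of Corollary \ref{coradjust} hold simultaneously; what remains is a strictly positive lower bound that does not depend on $m$, which is exactly the asserted positive constant lower bound on the proportion of $k$-decompositions into MSTD subsets.

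The hard part will be reconciling the probability model. Corollary \ref{coradjust}, and the Lemmas \ref{MOmodi} and \ref{taudefi} behind it, are proved for $M$ a uniformly random subset, i.e.\ each element present with probability $1/2$; but in a uniform $k$-decomposition the marginal law of each $M_i$ includes an element with probability $1/k$. For $k=2$ the two models coincide, so the argument above runs verbatim and recovers Theorem 1.4 of \cite{AMMS}. For $k\ge 3$ one must instead re-run the estimates of Lemmas \ref{MOmodi} and \ref{taudefi} at inclusion probability $1/k$, replacing each factor $2^{-t}$ by $((k-1)/k)^{t}$; because lowering the density of $M_i$ makes the central sums harder to realize, the delicate step is to verify that the threshold $a_i$ and the exponents $\tau(L_i),\tau(R_i)$ still force the per-set failure probability to zero as $m\to\infty$, so that the sufficient condition should be read with these $k$-adjusted constants in $f$.
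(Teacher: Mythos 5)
Your proposal follows essentially the same route as the paper's own proof: the paper likewise bounds
$\mathbb{P}(\forall i,\ S_i \text{ is MSTD})\ \ge\ 1-\mathbb{P}\big(\exists i,\ S_i+S_i\not\supseteq [2n-a_i+1,\,2n+2m+a_i-3]\big)$,
applies the union bound (which, as you implicitly use, needs no independence among the dependent $M_i$) together with Corollary \ref{coradjust} to each summand, and closes by noting that the containment makes $S_i$ an $SP_n$ set and hence MSTD by the argument of Lemma \ref{P_nsum}; your choice $\varepsilon=(1-\sum_i f(L_i,R_i))/(2k)$ is exactly the paper's ``for $m$ sufficiently large'' step made explicit.

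Where you diverge is your final paragraph, and there you are more careful than the paper, not less. The paper applies Corollary \ref{coradjust} verbatim for every $k$, even though that corollary (through Lemmas \ref{MOmodi} and \ref{taudefi} and the results quoted from \cite{MO}) is proved for $M$ a uniformly random subset, i.e.\ inclusion probability $1/2$, whereas under a uniform $k$-decomposition each element lies in $M_i$ with marginal probability $1/k$. Since $1-1/k>1/2$ for $k\ge 3$, the quantities $2^{-\tau(L)}$, $2^{-\tau(R)}$, $2^{-|L|}$, $2^{-|R|}$ are no longer valid upper bounds in that regime, so the sufficient condition should indeed be read with $2^{-t}$ replaced by $((k-1)/k)^{t}$ (and the constant inherited from Proposition 8 of \cite{MO} adjusted accordingly), exactly as you say; for $k=2$ --- the only case the paper actually instantiates, in its corollary reproving Theorem 1.4 of \cite{AMMS} --- the two models coincide and the proof is complete as written. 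One small correction to your closing sentence: the fringe-driven part of the failure probability does not tend to zero as $m\to\infty$; it is precisely the fixed constant $f(L_i,R_i)$. Only the central-interval term (the $\varepsilon$ in Lemma \ref{MOmodi}, coming from Lemma 7 of \cite{MO}) vanishes, and that term adapts to inclusion probability $1/k$ without difficulty, since a middle sum with $t$ representation pairs is missed with probability at most $(1-1/k^2)^{t}$. So no further delicacy concerning $a_i$ or the exponents $\tau(L_i),\tau(R_i)$ arises beyond the change of base you propose.
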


\begin{proof}
Let $a_i$ be the corresponding $a$ value (defined in Lemma \ref{taudefi}) for $L_i$ and $R_i$. By Corollary \ref{coradjust}, for any $\varepsilon > 0$ and $m$ sufficiently large, the probability
\begin{equation}
    \begin{split}
        &\mathbb{P}(\forall i,S_i \mbox{ is MSTD})\ge \mathbb{P}(\forall i, S_i+S_i\supseteq [2n-a_i+1,2n+2m+a_i-3])\\
        &\ge \ 1-\mathbb{P}(\exists i, S_i+S_i\not\supseteq [2n-a_i+1,2n+2m+a_i-3])\\
        &> \ 1-\sum_{i=1}^{k} f(L_i,R_i) - \varepsilon>0.
    \end{split}
\end{equation}
The first inequality is because $[2n-a_i+1,2n+2m+a_i-3]\subseteq S_i+S_i$ guarantees that $S_i$ is $SP_n$ and thus, MSTD. By Lemma \ref{P_nsum}, $S_i$ is MSTD.
\end{proof}

Now, we prove Theorem 1.4 in \cite{AMMS} easily.

\begin{cor}
There exists a constant $c>0$ such that the proportion of 2-decompositions of $[0,r-1]$ into two MSTD subsets is at least $c$.
\end{cor}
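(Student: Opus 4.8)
The plan is to deduce this corollary directly from Theorem~\ref{sufcon} by exhibiting a concrete $2$-decomposition $A_1 = L_1 \cup R_1$, $A_2 = L_2 \cup R_2$ of $[0,2n-1]$ into two disjoint MSTD, $P_n$ sets for which the quantity $1 - f(L_1,R_1) - f(L_2,R_2)$ is strictly positive. Since Theorem~\ref{sufcon} converts such a decomposition into a positive constant lower bound for the proportion of $2$-decompositions of $[0,r-1]$ (by taking $r = 2n+m$ and letting $m \to \infty$), the entire content of the corollary is the existence of a single good fringe pair. I would reuse the explicit example already provided in Remark~\ref{Exp1}, shifting it by $-1$ to land in $[0,2n-1]$ as Theorem~\ref{sufcon} requires, rather than searching anew.

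The key steps, in order, are as follows. First I would recall the statement of Theorem~\ref{sufcon}: it suffices to find pairwise disjoint MSTD $P_n$ sets $A_1,A_2$ with $A_1 \cup A_2 = [0,2n-1]$ and $1 - \sum_{i=1}^{2} f(L_i,R_i) > 0$, where $f(L,R) = (a-2)(2^{-\tau(R)}+2^{-\tau(L)}) + 6(2^{-|L|}+2^{-|R|})$. Second, I would take the pair from Remark~\ref{Exp1} (shifted down by $1$), with $n=20$, so that both sets are confirmed MSTD and $P_n$ and partition the interval. Third, the real computation is to verify that the two terms $f(L_1,R_1)$ and $f(L_2,R_2)$ are small enough that their sum is below $1$. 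Here I would observe that $|L_i|$ and $|R_i|$ are on the order of $10$ each, so the $6(2^{-|L_i|}+2^{-|R_i|})$ contributions are tiny; the potentially dangerous terms are the $(a_i-2)2^{-\tau(L_i)}$ and $(a_i-2)2^{-\tau(R_i)}$ pieces, which depend on the smallest integer $a_i$ for which $L_i+L_i$ and $R_i+R_i$ already cover the required fringe of sums, and on the counts $\tau(L_i),\tau(R_i)$ of extreme fringe elements. I would compute $a_i$, $\tau(L_i)$, $\tau(R_i)$ for the explicit sets and check numerically that $f(L_1,R_1) + f(L_2,R_2) < 1$.

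The main obstacle I anticipate is precisely this last verification: $a_i$ can be moderately large (it measures how deep into the sum set one must go before $L_i+L_i$ fills an interval), so the factor $a_i-2$ could inflate the bound, and if $\tau(L_i)$ or $\tau(R_i)$ happens to be small—say $2$ or $3$—then $2^{-\tau}$ is not negligible and the product $(a_i-2)2^{-\tau}$ might push the sum above $1$. If the example from Remark~\ref{Exp1} does not satisfy the inequality with comfortable margin, the fallback is to enlarge the guaranteed fringe structure: the conditions $[1,4]\cup\{n\}\subseteq L_1$ and $\{n+1\}\cup[2n-3,2n]\subseteq R_1$ already force several extreme elements into $A_1$, which keeps $\tau(L_1),\tau(R_1)$ and hence the exponential terms under control, and one can always pad the fringes further to increase the relevant $\tau$ values and shrink $f$. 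Thus, if necessary, I would select a slightly larger $n$ with enough fringe elements guaranteed so that each $\tau$ is large enough to make both $f(L_i,R_i)$ terms individually less than $1/2$, yielding $1 - \sum_i f(L_i,R_i) > 0$ and completing the proof via Theorem~\ref{sufcon}.
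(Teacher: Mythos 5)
Your proposal takes essentially the same route as the paper: its proof applies Theorem~\ref{sufcon} to precisely the pair from Remark~\ref{Exp1} shifted down by $1$ (so $n=20$), and the numerical verification you defer does go through comfortably, with $a_1=12$, $a_2=4$, $\tau(L_1)=\tau(R_1)=6$, $\tau(L_2)=8$, $\tau(R_2)=9$, giving $f(L_1,R_1)<0.33$ and $f(L_2,R_2)<0.03$, so $1-\sum_i f(L_i,R_i)>0$ with large margin. The obstacle you anticipate does not materialize, and your fallback of padding the fringes is unneeded.
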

\begin{proof}
Let
\begin{align*}
L_1\ &=\ \{0,1,2,3,7,8,10,12,13,14,19\},\\
R_1\ &=\ \{20,25,26,27,30,32,36,37,38,39\},\\
L_2\ &=\ \{4,5,6,9,11,15,16,17,18\},\\
R_2\ &=\ \{21,22,23,24,28,29,31,33,34,35\}.
\end{align*}
Notice that $n=20$. We find that $a_1=12$ and $a_2=4$. From that we calculate $\tau(L_1)=6,\tau(R_1)=6,\tau(L_2)=8$ and $\tau(R_2)=9$. So, $f(L_1,R_1)$ is less than 0.33, while $f(L_2,R_2)$ is less than 0.03 for $m$ sufficiently large. By Theorem \ref{sufcon}, we are done.
\end{proof}



\section{Proof of Lemma \ref{threesetsforpartition}}\label{Apen34}
We prove that for a fixed $m\in\mathbb{N}$, $S=(0|1,1,2,1,\underbrace{4\ldots,4}_{m\text{-times}}, 3,1,1,2)$
is MSTD. The proof for other sets in the lemma follows similarly.

Note that $\max S=12+4m$. We will prove that $|S+S|\ge 26+6m$. Since $S$ contains $0$, $1$ and $2$, if the difference between two numbers, say $x<y$, in $S$ is less than or equal to 3, then $S+S$ contains $[x,y]$. If $a,b\in S$ and $a-b=4$, then in the worst case (in term of cardinality of the sum set), $S+S$ does not contain $a-1$. So, for the interval $[0,12+4m]$, $S+S$ misses at most $m-1$ sums because there are $m$ differences of $4$ and $8=4+4\in S+S$. Next, consider $[13+4m,24+8m]$ and observe that $S_1=\{\ell|1\le \ell\le 9+4m\mbox{ and }\ell \equiv 1\Mod 4\}\subseteq S$. Since $0\in S$, $S_1\in S+S$. We also have
\begin{align*}
    (12+4m)+S_1\ &=\ \{\ell|13+4m\le \ell\le 21+8m\mbox{ and }\ell \equiv 1\Mod 4\},\\
    (10+4m)+S_1\ &=\ \{\ell|11+4m\le \ell\le 19+8m\mbox{ and }\ell \equiv 3\Mod 4\},\\
    (9+4m)+S_1\ &=\ \{\ell|10+4m\le \ell\le 18+8m\mbox{ and }\ell \equiv 2\Mod 4\}.
\end{align*}
Note that
\begin{align*}
16+4m\ &= \ (12+4m)+4 \ \in\  S+S,\\
16+8m\ & =\ (8+4m)+(8+4m)\ \in\ S+S,\\
20+8m\ &= \ (10+4m)+(10+4m) \ \in\  S+S,\\
22+8m\ &= \ (10+4m)+(12+4m) \ \in\  S+S,\\
24+8m\ &= \ (12+4m)+(12+4m) \ \in\  S+S.
\end{align*}
On the interval $[13+4m, 24+8m]$, $S+S$ misses at most the whole set $\{\ell|20+4m\le \ell\le 12+8m\mbox{ and }\ell\equiv 0\Mod 4\}\cup\{23+8m\}$, which has $m$ numbers. Therefore, in total, $S+S$ misses at most $2m-1$ numbers.

Next, we show that the difference set $S-S$ misses at least $2m$ numbers by proving that $S-S$ contains none of the elements in $\{6+4\ell|0\le \ell\le m-1\}$. We use proof by contradiction. Suppose that there exists $0\le \ell\le m-1$ such that $6+4\ell$ is in $S-S$. Then, there must exist a run within $1,1,2,1,\underbrace{4\ldots,4}_{m\text{-times}},3,1,1,2$ that sums up to $6+4\ell$. Because $6+4\ell\equiv 2\Mod 4$, the run must either start within $1,1,2,1$ or end within $3,1,1,2$. Consider the following two cases:
\begin{enumerate}
    \item \textbf{Case I}: the run starts within $1,1,2,1$. Because $1+1+2+1=5<6$, the run must end within $3,1,1,2$. Therefore, the run sums up to a number of the form $a+4m+b$, where the value of $a$ and $b$ depend on where the run starts and where it ends, respectively. Since $a+4m+b=6+4\ell\le 6+4(m-1)$, $a+b\le 2$. This is a contradiction because $b\ge 3$.
    \item \textbf{Case II}: the run ends within $3,1,1,2$. Because there is no run within $\underbrace{4\ldots,4}_{m\text{-times}},\newline 3,1,1,2$ that sum up to $6+4\ell$, the run must start within $1,1,2,1$. Repeating the argument used in \textbf{Case I} and we have a contradiction.
\end{enumerate}
Therefore, $(S-S)\cap\{6+4\ell|0\le \ell\le m-1\}=\emptyset$ and so, $S-S$ misses at least $2m$ elements. This completes our proof that $S$ is MSTD. \hfill $\Box$


\section{Examples}\label{Apenex}


\subsection{Theorem \ref{2decomp}}\label{122}
We use $A_1$ and $A_2$ mentioned in Remark \ref{Exp1}. Pick $k=12$ and $m=30$. Set
\begin{align*}
A'_{1e}\ =\ &\{1,2,3,4,8,9,11,13,14,15,20\}\\
         &\cup \ \{24\}\ \cup\ [25,45]_2\ \cup\ \{46\}\cup \ \{50,51,72,73\}\\
         &\cup \ \{77\}\ \cup\ [78,98]_2\ \cup\ \{99\}\\
         &\cup \ \{103,108,109,110,113,115,119,120,121,122\},\\
A'_{2e}\ =\ &\{5,6,7,10,12,16,17,18,19\}\\
         &\cup \ [21,23]\ \cup\ [26,44]_2\ \cup\ [47,49]\cup\ \{52,53,54,69,70,71\}\\
         &\cup \ [74,76]\ \cup\ [79,97]_2\ \cup\ [100,102]\\
         &\cup\ \{104,105,106,107,111,112,114,116,117,118\}.
\end{align*}
We have $|A'_{1e}+A'_{1e}|-|A'_{1e}-A'_{1e}|=243-241=2$, $|A'_{2e}+A'_{2e}|-|A'_{2e}-A'_{2e}|=227-225=2$ and $(A'_{1e},A'_{2e})$ partitions $[1,122]$.


\subsection{5-decompositions}
We do not give an example of a 3-decomposition because our method is inefficient and involves a large set arising from the base expansion method. Neither do we give an example of a 4-decomposition because the method is straightforward. We use the efficient method to have a 5-decomposition into MSTD sets. Set
\begin{align*}
    M_1\ &=\ (1|1,1,2,1,\underbrace{4\ldots,4}_{119\text{-times}},3,1,1,2)\\
         &=\ \{1,2,3,5\}\ \cup\ [6,482]_4\ \cup\ \{485,486,487,489\}.
\end{align*}
Observe that \begin{align*}[1,489]\backslash M_1\ =\ &[4,488]_4\ \cup\ [7,483]_2.\end{align*}
Because $[4,488]_4$ is an arithmetic progression of length 122, we have \begin{align*}M_2\ &=\ 4A'_{1e}\mbox{ and}\\
    M_3\ &=\ 4A'_{2e}\end{align*}
partition $[4,488]_4$. Notice that in the example mentioned in \ref{122}, we can pick $m=147$ and find $A''_{1e}$ (containing 1) and $A''_{2e}$ that partition $[1,239]$. We have
\begin{align*}
    M_4\ &=\ 2A''_{1e}+5\mbox{ and} \\
    M_5\ &=\ 2A''_{2e}+5
\end{align*}
partition $[7,483]_2$. We have found $(M_1,M_2,M_3,M_4,M_5)$ that partitions $[1,489]$.



\subsection*{Acknowledgement.} We thank the referee for helpful comments on an earlier draft. We thank the participants from the 2018 SMALL REU program for many helpful conversations.

\bigskip


\begin{thebibliography}{99}


\bibitem{AMMS}
M. Asada, S. Manski, S. J. Miller, and H. Suh, Fringe pairs in
generalized MSTD sets, \textit{Int. J. Number Theory} \textbf{13}
(2017), 2653-2675.


\bibitem{He}
P. V. Hegarty, Some explicit constructions of sets with more sums
than differences, \textit{Acta Arith.} \textbf{130} (2007), 61-77.

\bibitem{HM}
P. V. Hegarty and S. J. Miller, When almost all sets are difference
dominated, \textit{Random Structures Algorithms} \textbf{35} (2009), 
118-136.

\bibitem{ILMZ1}
G. Iyer, O. Lazarev, S. J. Miller, and L. Zhang, Finding and counting
MSTD sets, in \textit{Combinatorial and additive number theory--CANT 2011 and 2012}, Springer-Verlag, New York, 2014, pp.\ 79-98.

\bibitem{ILMZ2}
G. Iyer, O. Lazarev, S. J. Miller, and L. Zhang, Generalized more
sums than differences sets, \textit{J. Number Theory} \textbf{132} (2012), 1054-1073.

\bibitem{Ma}
J. Marica, On a conjecture of Conway, \textit{Canad. Math. Bull.}
\textbf{12} (1969), 233-234.

\bibitem{MO}
G. Martin and K. O'Bryant, Many sets have more sums than
differences, in \textit{Additive Combinatorics}, Providence, RI, 2007, pp.\ 287-305.

\bibitem{MOS}
S. J. Miller, B. Orosz, and D. Scheinerman, Explicit constructions of
infinite families of MSTD sets, \textit{J. Number Theory} \textbf{130}
(2010), 1221-1233. \hfill\\
\bburl{https://arxiv.org/abs/0809.4621}.

\bibitem{MPR}
S. J.  Miller, S. Pegado, and L. Robinson, Explicit Constructions of
Large Families of Generalized More Sums Than Differences Sets, \textit{Integers}
\textbf{12} (2012), \#A30.

\bibitem{Na1}
M. B. Nathanson, Problems in additive number theory I, in 
\textit{Additive combinatorics}, Providence, RI, 2007, pp.\ 263-270.

\bibitem{Na2}
M. B. Nathanson, Sets with more sums than differences,
\textit{Integers} \textbf{7}
(2007), \#A5.

\bibitem{PW}
D. Penman and M. Wells, On sets with more restricted sums than
differences, \textit{Integers} \textbf{13} (2013), \#A57.

\bibitem{Ru1}
I. Z. Ruzsa, On the cardinality of $A + A$ and $A - A$,
in \textit{Combinatorics Year}, North-Holland-Bolyai T$\grave{{\rm a}}$rsulat, Keszthely, 1978, pp.\ 933-938.

\bibitem{Ru2}
I. Z. Ruzsa, Sets of sums and differences, in \textit{S\'eminaire de
Th\'eorie des Nombres de
Paris}, Birkh\"auser, Boston, 1984, pp.\ 267-273.

\bibitem{Ru3}
I. Z. Ruzsa, On the number of sums and differences, \textit{Acta Math. Sci.
Hungar.} \textbf{59} (1992), 439-447.

\bibitem{Sp}
W. G. Spohn, On Conway's conjecture for integer sets, \textit{Canad.
Math. Bull} \textbf{14} (1971), 461-462.

\bibitem{Zh1}
Y. Zhao, Constructing MSTD sets using bidirectional ballot
sequences, \textit{J. Number Theory} \textbf{130} (2010),
1212-1220.

\bibitem{Zh2}
Y. Zhao, Sets characterized by missing sums and differences,
\textit{J. Number Theory} \textbf{131} (2011), 2107-2134.

\end{thebibliography}
\end{document}